\documentclass{compositio}

\usepackage{amsmath,amssymb,amsthm,url}
\usepackage{comment,graphicx}

\numberwithin{equation}{section}

\newtheorem{thm}[equation]{Theorem}
\newtheorem*{thm*}{Theorem}

\newtheorem*{conj*}{Conjecture}
\newtheorem{lem}[equation]{Lemma}

\theoremstyle{remark}
\newtheorem{rmk}[equation]{Remark}

\newcommand{\field}[1]{\mathbb{#1}} 
\newcommand{\Q}{\field{Q}}
\newcommand{\C}{\field{C}}

\newcommand{\Z}{\field{Z}}

\newcommand{\F}{\field{F}}

\newcommand{\T}{\field{T}}

\newcommand{\quat}[2]{\displaystyle{\biggl(\frac{#1}{#2}\biggr)}}
\newcommand{\legen}[2]{\left(\frac{#1}{#2}\right)}

\newcommand{\cD}{\mathcal{D}}
\newcommand{\cH}{\mathcal{H}}
\newcommand{\cO}{\mathcal{O}}
\newcommand{\calO}{\mathcal{O}}

\newcommand{\fp}{\mathfrak{p}}
\newcommand{\fm}{\mathfrak{m}}
\newcommand{\frakp}{\mathfrak{p}}
\newcommand{\frakN}{\mathfrak{N}}

\newcommand{\fP}{\mathfrak{P}}

\newcommand{\slsh}[1]{\,|_{#1}\,}

\DeclareMathOperator{\Aut}{Aut}
\DeclareMathOperator{\End}{End}
\DeclareMathOperator{\Frob}{Frob}
\DeclareMathOperator{\Gal}{Gal}
\DeclareMathOperator{\GL}{GL}
\DeclareMathOperator{\SL}{SL}
\DeclareMathOperator{\PSL}{PSL}
\DeclareMathOperator{\PGL}{PGL}
\DeclareMathOperator{\tr}{tr}
\DeclareMathOperator{\lcm}{lcm} 
\DeclareMathOperator{\rmP}{P\!}

\newcommand{\spnew}{{}^{\text{new}}}


\begin{document}

\title[Nonsolvable number fields]{Nonsolvable number fields ramified \\ only at 3 and 5}
\author{Lassina Demb\'el\'e}
\email{l.dembele@warwick.ac.uk}
\address{Warwick Mathematics Institute, University of Warwick, Coventry CV4 7AL, United Kingdom}
\author{Matthew Greenberg}
\email{mgreenbe@math.ucalgary.ca}
\address{University of Calgary, 2500 University Drive NW, Calgary, AB, T2N 1N4, Canada}
\author{John Voight}
\email{jvoight@gmail.com}
\address{Department of Mathematics and Statistics, University of Vermont, 16 Col\-chester Ave, Burlington, VT 05401, USA}
\classification{11F41 (primary), 11G18, 11F80, 11R32, 11Y40, 11R52 (secondary)}
\keywords{Galois theory, Hilbert modular forms, Shimura curves, Galois representations, computational number theory.}
\thanks{The first author was supported by a SFB/TR 45 of the Deutsche For\-schungsgemeinschaft research grant; the second author acknowledges the support of an NSERC discovery grant; the third author was supported by NSF grant DMS-0901971.}

\begin{abstract}
For $p=3$ and $p=5$, we exhibit a finite nonsolvable extension of $\Q$ which is ramified only at $p$, proving in the affirmative a conjecture of Gross.  Our construction involves explicit computations with Hilbert modular forms.
\end{abstract}

\maketitle\vspace*{12pt}

The study of Galois number fields with prescribed ramification remains a central question in number theory.  Class field theory, a triumph of early twentieth century algebraic number theory, provides a satisfactory way to understand solvable extensions of a number field.  To investigate nonsolvable extensions, the use of the modern techniques of arithmetic geometry is essential.

Implicit in his work on algebraic modular forms on groups of higher rank, Gross~\cite{gross1} proposed the following conjecture.

\begin{conj*}\label{conj:unramified} 
For any prime $p$, there exists a nonsolvable Galois number field ramified only at $p$.
\end{conj*}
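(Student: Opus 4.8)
\emph{Sketch of the intended argument for $p = 3$ and $p = 5$.} Classical modular forms already handle the conjecture for all but the smallest primes: if $E/\Q$ is an elliptic curve of conductor $p$, then $E[p]$ is ramified only at $p$ --- good reduction away from $p$, multiplicative reduction at $p$ --- and for $p \ge 5$ with $E$ non-CM it typically has image containing $\SL_2(\F_p)$, which is nonsolvable; and for $p \in \{11, 13, \ldots\}$ one may instead use the level-one form $\Delta$. For $p = 3$ and $p = 5$ these sources fail, since $X_0(p)$ has genus $0$ and the small-weight cusp forms of level a power of $p$ either vanish or have solvable image. The plan is therefore to work over a totally real abelian base field $F/\Q$ that is itself ramified only at $p$ --- the quadratic field $\Q(\sqrt{5})$ for $p = 5$, and a suitable subfield of $\Q(\zeta_{3^k})^+$ (for instance the cyclic cubic field $\Q(\zeta_9)^+$) for $p = 3$ --- and to replace elliptic modular forms over $\Q$ by Hilbert modular forms over $F$.

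Having fixed such an $F$, with $\fp$ its unique ramified prime (lying over $p$), I would compute, via the Jacquet--Langlands correspondence and the known algorithms for algebraic modular forms on definite quaternion algebras (equivalently, via the cohomology of Shimura curves), the space of Hilbert cuspidal eigenforms over $F$ of small parallel weight and of level supported only at $\fp$ --- that is, of level $(1)$ or $\fp$ --- and single out an eigenform $f$ that is neither of CM type nor a base change from a proper subfield, so that its attached Galois representation may be expected to have large image. To $f$ one attaches, by the work of Carayol, Taylor, Blasius--Rogawski and others on Galois representations of Hilbert modular forms, a compatible system $\rho_{f,\lambda}\colon \Gal(\overline{F}/F) \to \GL_2(\overline{\Q}_\ell)$ unramified outside $\fp$ and $\ell$; I would take $\ell = p$ and reduce modulo a prime $\lambda \mid p$ to obtain $\overline{\rho}_{f,p}\colon \Gal(\overline{F}/F) \to \GL_2(\overline{\F}_p)$, which is then unramified outside $\fp$. (Taking $\ell = p$ is unavoidable: a nonsolvable extension of $F$ unramified everywhere is excluded by discriminant bounds, so the ramification must occur at $p$, and it is carried by the prime above $p$.) The fixed field $L$ of $\ker \overline{\rho}_{f,p}$ is then Galois over $F$ with group the image of $\overline{\rho}_{f,p}$ and ramified over $F$ only above $p$; since $F/\Q$ is abelian and ramified only at $p$, the Galois closure $M/\Q$ of $L$ is a Galois number field ramified only at $p$, and $M/\Q$ is nonsolvable precisely when the image of $\overline{\rho}_{f,p}$ is.

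The main difficulty will be to prove that the image of $\overline{\rho}_{f,p}$ is nonsolvable. By Dickson's classification of the finite subgroups of $\PGL_2(\overline{\F}_p)$, the projective image is cyclic, dihedral, $A_4$, $S_4$, $A_5$, or contains $\PSL_2(\F_{p^r})$ for some $r$; the first four are solvable, whereas $A_5$, and $\PSL_2(\F_{p^r})$ and $\PGL_2(\F_{p^r})$ for $p^r \ge 4$, are not --- in particular, for $p = 3$ the projective image must reach at least the $\F_9$ level, which is what forces one to take $F$ large enough there. I would exclude the solvable possibilities using the computed Hecke eigenvalues of $f$: reducibility and the dihedral case by verifying that $f$ admits no self-twist by a quadratic Hecke character (no CM), and the $A_4$ and $S_4$ cases by exhibiting unramified primes $\fq$ of $F$ whose Frobenius characteristic polynomials --- read off from the eigenvalues $a_{\fq}(f)$ modulo $\lambda$ --- force elements of order too large to lie in $S_4$, or by directly computing the order of the subgroup generated by finitely many such Frobenius elements. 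The remaining, more routine, obligations are to certify the modular forms computation --- sufficiently many Hecke operators, correct identification of the newform and of its coefficient field --- and to check that the particular $f$ chosen satisfies the hypotheses of an applicable theorem attaching a Galois representation. Carrying this out separately for $p = 3$ and for $p = 5$ then establishes the conjecture in these two cases.
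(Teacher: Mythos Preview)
Your overall strategy---pass to a totally real abelian field $F$ ramified only at $p$, compute Hilbert cusp forms via Jacquet--Langlands, attach a mod $p$ Galois representation, and rule out the solvable cases in Dickson's list by examining Frobenius orders---is exactly the plan the paper follows. The gap is in your choice of base field: both of the fields you name turn out to be too small, and the paper documents precisely why.

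For $p=5$ you propose $F=\Q(\sqrt{5})$. The paper reports (citing the first author's earlier work) that over $\Q(\sqrt{5})$ every relevant mod $5$ representation is reducible; one must go to the degree~$5$ subfield of $\Q(\zeta_{25})$. Even there, the unique level~$1$ newform is the base change of a classical weight~$2$ form of level $\Gamma_1(25)$, so by Khare--Wintenberger its mod $5$ image is solvable; the nonsolvable representations first appear at level $\fp_5$. For $p=3$ you suggest $\Q(\zeta_9)^+$. Over that cubic field the level~$1$ Shimura curve (the $(2,3,9)$-triangle group) has genus $0$, and the first cusp form occurs only at level $\fp_3^3$, where the Shimura curve is the CM elliptic curve with $j=0$; its mod $3$ representation is reducible, and by Buzzard--Diamond--Jarvis no further forms appear at higher $\fp_3$-power level. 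The paper therefore passes to $F=\Q(\zeta_{27})^+$ of degree~$9$, where the genus~$45$ Shimura curve at level~$1$ carries a $36$-dimensional Hecke constituent whose reduction at a degree~$27$ prime above $3$ has projective image $\PGL_2(\F_{3^{27}})$.

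A secondary point: ruling out the dihedral case for the \emph{residual} representation is not the same as checking that $f$ has no CM---a non-CM $f$ can have dihedral reduction. The paper sidesteps this by computing the exact orders $o_\fp(f)$ of Frobenius in $\PGL_2(k)$ from the characteristic polynomials $x^2-\overline{a}_\fp(f)x+N\fp$ and observing that elements of order dividing $q-1$ and $q+1$ simultaneously occur, which immediately excludes the cyclic, dihedral, and Borel possibilities and pins down the subfield $k'\subseteq k$.
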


Serre \cite{serre3,serre5} has answered this conjecture in the affirmative for primes $p \geq 11$ in his analysis of Galois representations associated to classical cusp forms, exhibiting an irreducible (continuous, odd) representation $\Gal(\overline{\Q}/\Q) \to \GL_2(\overline{\F}_p)$ ramified only at $p$.  Conversely, it is a consequence of the proof of Serre's conjecture by Khare and Wintenberger \cite{khare1} together with standard level lowering arguments that if $p \leq 7$ then any odd representation of the absolute Galois group $\Gal(\overline{\Q}/\Q)$ to $\GL_2(\overline{\F}_p)$ which is ramified only at $p$ is necessarily reducible (and thus solvable).  

Therefore, to find nonsolvable number fields which are ramified only at a prime $p \leq 7$ we are led to consider more general settings.  Computations by Lansky and Pollack \cite{LanskyPollack} predict the existence of a $G_2(\F_5)$-extension of $\Q$ ramified only at $5$; however, the general theory which would provably attach a Galois representation to such an automorphic form is absent (see Gross~\cite{gross2}).  Following a suggestion of Gross, the first author \cite{Dembele2} has recently constructed a nonsolvable extension ramified only at $p=2$ by instead enlarging the base field: he exhibits a Hilbert modular form of level $1$ and parallel weight $2$ over the totally real subfield $F=\Q(\zeta_{32})^+$ of $\Q(\zeta_{32})$ whose mod $2$ Galois representation cuts out a number field with Galois group $8 \cdot \SL_2(\F_{2^8})^2$ ramified only at $2$.  
(We review this construction in Section~\ref{sec:review-p=2}, and refer to work of the first author~\cite{Dembele2} for additional details.)  Here and throughout, given $n \in \Z_{>0}$ and a group $G$ we write $n\cdot G$ for a group isomorphic to a semidirect product $n\cdot G \cong G \rtimes \Z/n\Z$.

In this paper, we pursue these ideas further and prove that Gross' conjecture is true for $p=3$ (Theorem \ref{peq3}) and $p=5$ (Theorem \ref{peq5}).  

\begin{thm*} \label{mainthm}
For $p=3$ and $p=5$, there exist nonsolvable Galois number fields ramified only at $p$.
\end{thm*}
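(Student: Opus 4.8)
\emph{Strategy.} The obstruction recalled above --- that for $p\le 7$ every odd $\overline{\F}_p$-representation of $\Gal(\overline{\Q}/\Q)$ unramified outside $p$ is reducible, hence solvable --- disappears once we allow the \emph{base field} to grow. So the plan is: (i) fix a totally real field $F$ which is itself ramified only at $p$, namely a subfield of the $p$-power cyclotomic field $\Q(\zeta_{p^k})$; (ii) exhibit a Hilbert modular newform $f$ over $F$ of parallel weight $2$ whose level $\mathfrak n$ is trivial or a power of the prime $\mathfrak p$ of $F$ above $p$; (iii) attach to $f$ a residual Galois representation $\bar\rho_f\colon\Gal(\overline{\Q}/F)\to\GL_2(\F_q)$, with $q$ a power of $p$, which is unramified outside $\mathfrak p$; (iv) show that the image of $\bar\rho_f$ is nonsolvable; and (v) conclude that the Galois closure over $\Q$ of the fixed field of $\ker\bar\rho_f$ is a nonsolvable number field ramified only at $p$. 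Steps (i) and (v) are formal: $F/\Q$ is abelian and ramified only at $p$, a compositum of number fields ramified only at $p$ is again ramified only at $p$, and a group with a nonsolvable subquotient is nonsolvable. The representation in (iii) comes from the theory of Galois representations attached to Hilbert modular forms --- realized in the \'etale cohomology of a suitable Shimura curve when this is possible, and otherwise via congruences --- and its ramification outside $\mathfrak p$ is controlled by the prime-to-$p$ part of $\mathfrak n$, which is trivial.

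\emph{Computing $f$.} Via the Jacquet--Langlands correspondence we transfer the problem to a quaternion algebra $B$ over $F$ --- definite, computing with Brandt matrices and the Hecke action on an ideal class set, or indefinite and split at exactly one archimedean place, computing with the cohomology of an arithmetic Fuchsian group --- and determine the space $S_2(\mathfrak n)$ of parallel weight $2$ cusp forms, for $\mathfrak n$ a power of $\mathfrak p$. Diagonalizing the Hecke operators yields newforms with various eigenvalue fields; we single out one, $f$, and compute the eigenvalues $a_{\mathfrak q}\equiv\tr\bar\rho_f(\Frob_{\mathfrak q})\pmod{\lambda}$ for many primes $\mathfrak q\nmid p$, where $\lambda\mid p$ is a fixed prime of the eigenvalue field of $f$.

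\emph{Nonsolvability of the image.} By Dickson's classification of finite subgroups of $\PGL_2(\overline{\F}_p)$, the projective image $\overline{G}$ of $\bar\rho_f$ is either contained in a Borel (i.e.\ $\bar\rho_f$ is reducible), or cyclic, dihedral, $A_4$, $S_4$, or $A_5$, or else contains $\PSL_2(\F_{p^m})$ for some $m\ge 1$. We eliminate the solvable alternatives using the computed Frobenius traces: reducibility forces $a_{\mathfrak q}\equiv\chi_1(\mathfrak q)+\chi_2(\mathfrak q)$ for two characters of $p$-power order and bounded conductor supported at $\mathfrak p$, which finitely many $\mathfrak q$ contradict; a cyclic or dihedral $\overline{G}$ forces $\bar\rho_f$ to become reducible over one of the finitely many quadratic extensions of $F$ ramified only at $\mathfrak p$, hence $a_{\mathfrak q}\equiv 0$ for all $\mathfrak q$ inert therein, which one inert prime contradicts; and $\overline{G}\in\{A_4,S_4\}$ forces $a_{\mathfrak q}^2/\det\bar\rho_f(\Frob_{\mathfrak q})\in\{0,1,2,4\}\subset\F_q$ for all $\mathfrak q$, which again one prime contradicts. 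What survives --- $\overline{G}\supseteq\PSL_2(\F_{p^m})$, or $\overline{G}\cong A_5$ when $p=3$ --- is nonsolvable in every case, and step (v) completes the proof for $p=3$ and $p=5$.

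\emph{The main obstacle.} The difficulty is entirely in making steps (ii)--(iv) effective rather than heuristic. The Hilbert modular computations take place over totally real fields of degree roughly $9$ (for $p=3$) and $10$ (for $p=5$) in which $p$ is totally ramified, so the whole argument depends on choosing $B$ and the computational model so that the Hecke module stays tractable and the eigenvalues can be certified. A second, more conceptual point is that for the small primes $p=3,5$ Dickson's classification is delicate --- the "exceptional" groups $A_4\cong\PSL_2(\F_3)$, $S_4\cong\PGL_2(\F_3)$, and $A_5$ have order divisible by $p$ --- so turning finitely many Frobenius traces into a genuine proof of nonsolvability requires the case analysis above together with attention to the local behaviour of $\bar\rho_f$ at $\mathfrak p$.
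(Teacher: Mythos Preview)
Your strategy is exactly the paper's: Hilbert cuspforms of parallel weight $2$ over a totally real subfield of $\Q(\zeta_{p^k})$ at level a power of $\mathfrak p$, the attached residual Galois representation computed via Jacquet--Langlands, and Dickson's classification to force a nonsolvable projective image. The paper carries this out with $F=\Q(\zeta_{27})^+$ (degree $9$, level $1$) for $p=3$ and the degree-$5$ subfield of $\Q(\zeta_{25})$ (level $\mathfrak p_5$) for $p=5$---not degree $10$ as you guess---and rather than your trace-identity eliminations it computes the \emph{orders} of several $\rmP\bar\rho_f(\Frob_{\mathfrak q})$ in $\PGL_2(k)$ and uses divisibility to pin the image down as the full $\PGL_2(k)$, which is stronger than mere nonsolvability and sidesteps the exceptional-group bookkeeping you flag. (One small slip: in your reducible case the characters $\chi_i$ land in $\F_q^\times$, so their order divides $q-1$ and is prime to $p$, not a $p$-power.)
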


We construct these Galois extensions by looking at the reduction of Hilbert modular forms of parallel weight $2$ defined over a totally real field $F$, where $F=\Q(\zeta_{27})^+$ for $p=3$ and $F$ is the degree $5$ subfield of $\Q(\zeta_{25})$ for $p=5$.  We compute with spaces of Hilbert modular forms via the Jacquet-Langlands correspondence, which allows us to locate systems of Hecke eigenvalues in the (degree $1$) cohomology of a Shimura curve.  We combine methods of the first author \cite{Dembele2} with those of the second and third authors \cite{GreenbergVoight} to compute with these spaces explicitly.

In the Table below, we list the fields $K$ we construct, the prime $p$ at which $K$ is ramified, the level $\frakN$ of the corresponding Hilbert modular form, the Galois group $\Gal(K/\Q)$, and an upper bound on the root discriminant $\delta_K$ of $K$.  We denote by $\frakp_3$ and $\frakp_5$ the unique primes above $3$ and $5$ in the field $F$.  

\begin{table}[h]
\begin{equation} \notag
\begin{array}{c|c|c|c|c}
p & \frakN & \Gal(K/\Q) & \approx [K:\Q] & \delta_K \leq \\
\hline
3\rule{0pt}{2.5ex} & 1 & 9\cdot \PGL_2(\F_{3^{27}}) & 4.0\cdot 10^{39} & 76.21 \\
  & \frakp_3 & 9\cdot \PGL_2(\F_{3^{18}}) & 5.2 \cdot 10^{26} & 86.10 \\
  & \frakp_3 & 9\cdot \PGL_2(\F_{3^{36}}) & 3.0 \cdot 10^{52} & 86.10 \\
\hline
5\rule{0pt}{2.5ex} & \frakp_5 & 5\cdot \PGL_2(\F_{5^5}) & 1.5 \cdot 10^{12} & 135.39 \\
  & \frakp_5 & 10\cdot \PSL_2(\F_5)^5 & 3600\ (*) & 135.39 \\
  & \frakp_5 & 5\cdot \PGL_2(\F_{5^{10}}) & 4.6 \cdot 10^{20} & 135.48 \\ 
\end{array}
\end{equation}
\begin{center}
\textbf{Table}: Nonsolvable fields $K$ ramified only at $p$
\end{center}
\end{table}


Remarkably, after a preprint of our result was posted, Roberts \cite{Roberts5} exhibited an explicit polynomial in $\Z[x]$ of degree $25$ that generates a field of discriminant $5^{69}$ with Galois group $10 \cdot \PSL_2(\F_5)^5$.  The splitting behavior of primes in its Galois closure matches one of the fields we construct (marked above as $(*)$) as far as we computed.  His construction uses the 5-division field of an elliptic curve defined over the degree 5 subfield of $\Q(\zeta_{25})$. By combining a theorem of Skinner and Wiles~\cite[Theorem 5.1]{SkinnerWiles} and Jarvis~\cite[Mazur's principle]{jarvis}, we prove that in fact the two fields are isomorphic.  Roberts then computes (agreeably) that the root discriminant of this Galois field is equal to $5^{3-1/12500} < 135.39$.

On the assumption of the Generalized Riemann Hypothesis (GRH), the Odlyzko bounds \cite{Martinet} imply that each of the fields we construct is totally imaginary.  Indeed, these bounds \cite{Odlyzko} show that $\delta_K \geq 44$ for totally complex fields $K$ of sufficiently large degree, so these fields come very close (within a factor 2 for $p=3$ and factor $3$ for $p=5$) of these bounds, which is remarkable considering they have such large degree.  As the field that Demb\'el\'e constructs is (provably) totally complex, we leave unanswered the question of whether a totally real nonsolvable extension of $\Q$ exists which is ramified only at $p$ for these primes.

This article is organized as follows.  In Section 1, we review the case $p=2$.  In Sections 2 and 3, we treat in detail the cases $p=3$ and $p=5$, respectively.  Finally, in Section 4, we consider the case $p=7$ and discuss the obstacles we face in applying our methods in this instance.  Throughout, our computations are performed in the \textsf{Magma} computer algebra system \cite{magma1}.

\begin{acknowledgements}
The authors would like to thank Benedict Gross for many inspirational email exchanges and helpful suggestions. They would also like to thank the \textsf{Magma} group at the University of Sydney for their continued hospitality and especially Steve Donnelly for his assistance.  Finally, they are indebted to Richard Foote for the reference \cite{GoLySol} (used in the proof of Case 2 of Lemma \ref{peq5}) and to David Roberts and the referee for their comments and corrections.
\end{acknowledgements}

\section{A review of the case $p=2$}\label{sec:review-p=2}
In this section, we briefly review work of the first author~\cite{Dembele2}, who established Gross' conjecture for $p=2$ as follows.

\begin{thm} \label{thm11}
There exists a nonsolvable number field $K$ which is ramified only at $2$ with Galois group $8\cdot\mathrm{SL}_2(\F_{2^8})^2$.
\end{thm}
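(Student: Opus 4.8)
The plan is to realize $K$ as the number field cut out by the mod $2$ Galois representation attached to a Hilbert modular eigenform over $F = \Q(\zeta_{32})^+$. First I record the relevant features of this field: since $\varphi(32) = 16$ we have $[F:\Q] = 8$, the extension $F/\Q$ is cyclic with $\Gal(F/\Q) \cong \Z/8\Z$ and is ramified only at $2$, and $F$ has class number (indeed narrow class number) $1$. Because $[F:\Q]$ is even, one may choose the totally definite quaternion algebra $B$ over $F$ that is unramified at every finite place. By the Jacquet--Langlands correspondence, the space $S_2(\frakN)$ of Hilbert cusp forms over $F$ of parallel weight $2$ and level $\frakN = (1)$ is isomorphic, as a Hecke module, to the space of weight-$2$ automorphic forms on $B^\times$; the latter is finite-dimensional and is computed from the right ideal class set of a maximal order $\cO \subset B$, with the Hecke operators $T_\frakp$ realized by Brandt matrices. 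One computes enough of this data, diagonalizes, and locates an eigenform $f$ whose system of Hecke eigenvalues is not that of an Eisenstein series nor of a form with solvable (for instance, reducible or dihedral) mod $2$ image.

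Next I attach to $f$ its residual Galois representation. Using the known constructions of Galois representations associated to Hilbert modular eigenforms of parallel weight $2$---which in this even-degree, level-$(1)$ situation proceed by transferring, after an auxiliary level raising and a further instance of Jacquet--Langlands, to a quaternion algebra split at exactly one infinite place, and then reading the representation off the \'etale cohomology of the associated Shimura curve---one obtains $\bar\rho_f \colon \Gal(\overline{F}/F) \to \GL_2(\F_q)$, where $\F_q$ is the residue field of the field of Hecke eigenvalues at a chosen prime above $2$. Since $F/\Q$ is ramified only at $2$ and $\frakN = (1)$, the representation $\bar\rho_f$ is unramified at every prime of $F$ not above $2$; and since $f$ has trivial nebentypus, $\det \bar\rho_f$ is the mod $2$ cyclotomic character, which is trivial, so the image of $\bar\rho_f$ lies in $\SL_2(\F_q)$. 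To pin down this image I would invoke Dickson's classification of the finite subgroups of $\PSL_2(\overline{\F}_2)$---cyclic, dihedral, $A_4$, $S_4$, $A_5 \cong \PSL_2(\F_4)$, or $\PSL_2(\F_{q_0})$ for a subfield $\F_{q_0} \subseteq \F_q$---and rule out every case except the last with $\F_{q_0} = \F_q = \F_{2^8}$: computing enough Hecke eigenvalues $a_\frakp$ and inspecting the reductions of the characteristic polynomials $x^2 - a_\frakp x + N\frakp$, one exhibits Frobenius elements forcing the image to be non-dihedral and non-exceptional, and elements whose order does not divide $2^e - 1$ for any proper divisor $e$ of $8$, so that $\bar\rho_f$ must surject onto $\SL_2(\F_{2^8})$, whose projective quotient $\PSL_2(\F_{2^8}) = \SL_2(\F_{2^8})$ is a nonabelian simple group.

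Finally I descend to $\Q$. The $\Gal(F/\Q)$-conjugates $f^\sigma$ are again cuspidal eigenforms of level $(1)$; tracking their eigensystems together with the primes above $2$ in the field of eigenvalues, one finds that exactly two inequivalent residual representations $\bar\rho_1, \bar\rho_2$ arise, each with image $\SL_2(\F_{2^8})$, permuted by $\Gal(F/\Q)$, and cutting out extensions $K_1, K_2$ of $F$ that are linearly disjoint---a Goursat-type argument reduces this last point to checking that $\bar\rho_1$ is not a twist of $\bar\rho_2$, which one verifies from the differing Hecke data. Then $K = K_1 K_2$ is Galois over $\Q$, is ramified only at $2$ (as $\bar\rho_f$ and $F/\Q$ are unramified outside $2$), and has $\Gal(K/F) \cong \SL_2(\F_{2^8}) \times \SL_2(\F_{2^8})$ with $\Gal(F/\Q)$ acting, so that $\Gal(K/\Q) \cong \SL_2(\F_{2^8})^2 \rtimes \Gal(F/\Q) = 8 \cdot \SL_2(\F_{2^8})^2$; it is nonsolvable because $\SL_2(\F_{2^8})$ is. I expect the main obstacle to be computational: over a degree-$8$ field the class set of $\cO$ is large, so actually computing $S_2(\frakN)$ and enough Brandt and Hecke data to \emph{certify}---rather than merely suggest---both the surjectivity of $\bar\rho_f$ and the linear disjointness responsible for the factor $\SL_2(\F_{2^8})^2$ is the delicate heart of the argument; a secondary subtlety is citing the correct existence theorem for the attached Galois representation in parallel weight $2$ over an even-degree field, where no level-$(1)$ Shimura variety of positive dimension is directly available.
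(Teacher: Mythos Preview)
Your proposal is correct and takes essentially the same approach as the paper (which is itself a summary of Demb\'el\'e's original argument): compute $S_2(1)$ over $F=\Q(\zeta_{32})^+$ via Jacquet--Langlands to the totally definite quaternion algebra, invoke Taylor's theorem for the attached mod $2$ Galois representation, verify the image, and descend to $\Q$ by taking the compositum of the fields cut out by the two inequivalent residual representations. The paper phrases the source of these two representations as the two non-Eisenstein maximal ideals of $\T_f\otimes\F_2$ (equivalently, the two primes above $2$ in the Hecke field $H_f$, interchanged by $\Aut(H_f)\cong\Gal(F/\Q)$), which is exactly your description in terms of the $\Gal(F/\Q)$-conjugates of $f$; the specific computational output is that $f$ lies in the $16$-dimensional Hecke constituent of the $57$-dimensional space $S_2(1)$.
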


Let $F=\Q(\zeta_{32})^+$ be the totally real subfield of the cyclotomic field $\Q(\zeta_{32})$ and let $\Z_F$ denote its ring of integers.  The field $K$ in Theorem \ref{thm11} arises from a Galois representation associated to an eigenform in the space $S_2(1)$ of cuspidal Hilbert modular forms of level $1$ and parallel weight $2$ over $F$.  Let $\T$ denote the Hecke algebra acting on $S_2(1)$, the $\Z$-subalgebra of $\End(S_2(1))$ generated by the Hecke operators $T_\fp$ with $\fp$ a prime of $\Z_F$.  The space $S_2(1)$ has dimension 57 and decomposes into Hecke-irreducible constituents with dimensions $1$, $2$, $2$, $4$, $16$, and $32$. Let $f$ be a newform in the 16-dimensional constituent, and let $\T_f$ denote the restriction of the Hecke algebra to this constituent. (We only consider this constituent because the remaining ones all reduce to zero modulo $2$; see Remark \ref{notinteresting} below for a similar observation when $p=3$.)  Let $H_f=\T_f\otimes\Q=\Q(a_\fp(f))$ be the field of Fourier coefficients of $f$, where $f \slsh{} T_\fp = a_\fp(f)f$, and let $\Delta=\mathrm{Aut}(H_f)$. Let $E$ be the fixed field of $\Delta$ so that $\Gal(H_f/E)=\Delta$. 

By the work of Taylor \cite{taylor1}, there exists a (totally odd) Galois representation
\[ \overline{\rho}_f:\Gal(\overline{F}/F) \to \GL_2(\T_f \otimes \F_2) \]
such that for any prime $\fp \nmid 2$ of $\Z_F$ we have $\tr(\overline{\rho}_f(\Frob_\fp)) \equiv a_\fp(f) \pmod{2}$ and $\det(\overline{\rho}_f(\Frob_\fp)) \equiv N\fp \pmod{2}$. The mod $2$ Hecke algebra $\T_f\otimes\F_2$ has two maximal non-Eisenstein ideals $\mathfrak{m}_f$ and $\mathfrak{m}_f'$ with the same residue field $k=\F_{2^8}$. One obtains the field $K$ as the compositum of the fixed fields of $\ker (\overline{\rho}_f\bmod\mathfrak{m}_f)$ and $\ker (\overline{\rho}_f\bmod\mathfrak{m}_f')$.   It is conjectured, in analogy with the Eichler-Shimura construction for $\Q$,  that there exists a 16-dimensional abelian variety $A_f$ with everywhere good reduction and real multiplication by $H_f$ such that $\overline{\rho}_f$ can be realised in the 2-torsion of $A_f$.

The Galois group $\Gal(F/\Q)$ acts on $\T$ by acting on the (prime) ideals of $\Z_F$. This action clearly preserves the decomposition of $\T$ according to Hecke-irreducible constituents. By Shimura~\cite[Proposition 2.6]{shimura2}, the action of $\Delta$ also preserves those constituents. In particular, both these actions preserve $\T_f$ and hence must be compatible.  Therefore, for each $\sigma\in\Gal(F/\Q)$, there is a unique $\tau=\tau(\sigma)\in\Delta$ such that, for all prime ideals $\fp\subset\Z_F$, we have
\[ a_{\sigma(\fp)}(f)=\tau(a_{\fp}(f)). \]
The map $\tau$ thus yields a homomorphism 
\begin{equation}  \label{Deltatau}
\begin{aligned}
\Gal(F/\Q) &\to \Delta \\
\sigma &\mapsto \tau(\sigma)
\end{aligned}
\end{equation}
which we see in this case is an isomorphism.  

Since $\Gal(F/\Q)$ is abelian, it follows that $H_f$ must be a ray class field over $E$ such that $8=[F:\Q]$ divides $[H_f:\Q]=\dim A_f=16$, if the conjectural abelian variety $A_f$ indeed exists.  By direct calculations, we show that $E=\Q(\sqrt{5})$ and that $H_f$ is the ray class field of conductor $\fP_5\fP_{89}\fP_{661}$, where $\fP_5$ is the unique prime above $5$, and $\fP_{89}$ (resp.\ $\fP_{661}$) is one of the split primes above $89$ (resp.\ $661$).  The ideal $2\Z_{H_f}$ factors as $2\Z_{H_f}=\mathfrak{P}\mathfrak{P}'$, with $\tau(\mathfrak{P})=\mathfrak{P}'$ for any cyclic generator $\tau$ of $\Delta$. (It is not hard to see that the primes $\mathfrak{P}$ and $\mathfrak{P}'$ correspond to the maximal ideals $\mathfrak{m}_f$ and $\mathfrak{m}_f'$ under the reduction map $\T_f\to\T_f\otimes\F_2$.)  

We are grateful to Gross for these beautiful observations, which extend to the cases we consider in the following sections.

\begin{rmk}
One striking fact about this construction is that the field $K$ has a very small root discriminant compared to its degree. Indeed, Serre~\cite{Serre6} shows that the root discriminant $\delta_K$ of $K$ satisfies $\delta_K\le 55.4$.  Although this estimate is much lower than the bounds we obtain for the fields constructed with $p=3$ and $p=5$, it nevertheless suggests that one might be able to construct towers of number fields with low root discriminant bounds in this way.  The smallest such bound for a tower currently known is $82.2$, obtained by Hajir and Maire~\cite{HajirMaire2, HajirMaire1}.  
\end{rmk}

\section{A nonsolvable Galois number field ramified only at $3$}

In this section, we prove that Gross' conjecture is true for $p=3$.  We refer to the work of the second and third author \cite{GreenbergVoight} as a reference for the method employed.

To begin, we choose a totally real base field $F$ which is ramified only at $p=3$.  The enumeration of such fields with small degree has been extensively studied by Jones \cite{JonesTables}, Jones and Roberts \cite{JonesRoberts}, Brueggeman \cite{BrueggemanSeptic}, Lesseni \cite{LesseniOctic, LesseniNonic}, and others.  Ordering fields by degree and then discriminant, we find up to degree $9$ that the only such fields $F \neq \Q$ are the (totally real) subfields of $\Q(\zeta_{27})$, namely $\Q(\zeta_9)^+$ and $\Q(\zeta_{27})^+$ having degrees $3$ and $9$, respectively.

\subsection*{Degree 3}

First consider the field $F=\Q(\zeta_9)^+$ of degree $3$, and let $\Z_F$ denote its ring of integers.  By the Jacquet-Langlands correspondence (see e.g.\ Hida \cite[Proposition 2.12]{HidaCM}), the space of Hilbert modular cusp forms over $F$ is isomorphic as a Hecke module to the space of quaternionic modular forms over the quaternion algebra $B$ which is ramified at $2$ of the $3$ real places of $F$ and no finite place.  Let $X(1)$ be the Shimura curve associated to a maximal order in $B$.  It is well-known that $X(1)$ has genus zero---in fact, it corresponds to the $(2,3,9)$-triangle group \cite[Chapter 5]{VoightThesis}---and consequently there are no Hilbert cusp forms of level $1$ associated to this field.  Raising the level at $3$, the first form occurs in level $3\Z_F=\fp_3^3$, where $\fp_3$ is the prime in $\Z_F$ of norm $3$.  By work of Buzzard, Diamond and Jarvis~\cite[Proposition 4.13(a)]{BDJ}, it then follows that there are only reducible forms in levels that are higher powers of $\fp_3$. 
Indeed, in level $3\Z_F$, the corresponding Shimura curve $X_0(3)$ has genus $1$ and is defined over $\Q$, and Elkies \cite{Elkies} has shown that it is isomorphic to the elliptic curve $A$ defined by $y^2=x^3-432$ (or more symmetrically by $x^3+y^3=1$) with $j(A)=0$.  However, since $A$ has complex multiplication, the mod $3$ representation associated to $E$ is reducible: indeed $A[3](\Q)=A[3](F) \cong \Z/3\Z$.  (See also Brueggeman \cite{BrueggemanNonsolvable} for an argument with Odlyzko bounds which proves, assuming the GRH, that there are only finitely many nonsolvable finite extensions of $F$ with Galois group contained in $\GL_2(\overline{\F}_3)$.)

\subsection*{Degree 9, level 1}

Next, we are led to consider instead the field $F=\Q(\zeta_{27})^+$; here we will find our nonsolvable extension.  Let $\Z_F$ again denote the ring of integers of $F$.  The field $F$ is a cyclic extension of $\Q$ of degree $9$ with discriminant $d_F=3^{22}$, and $F$ is generated by $\lambda=2\cos(2\pi/27)=\zeta_{27}+1/\zeta_{27}$, an element which satisfies
\[ \lambda^9 - 9\lambda^7 + 27\lambda^5 - 30\lambda^3 + 9\lambda - 1 = 0. \]
Moreover, the field $F$ has strict class number $1$.

Let $B$ be the quaternion algebra $B=\quat{-1,u}{F}$ where 
\[ u=-\lambda^6 + \lambda^5 + 5\lambda^4 - 4\lambda^3 - 6\lambda^2 + 3\lambda, \]
i.e.,\ $B$ is generated by $\alpha,\beta$ as an $F$-algebra subject to $\alpha^2=-1$, $\beta^2=u$ and $\beta\alpha=-\alpha\beta$.  The algebra $B$ is ramified at $8$ of the $9$ real places of $F$ and at no finite place.  Indeed, the element $u \in \Z_F^*$ is a unit with the property that $v(u)<0$ for a unique real place $v$ of $F$.  The order $\cO \subset B$ generated as a $\Z_F$-algebra by $\alpha$ and 
\[ \frac{1}{2}\left((\lambda^8 + \lambda^6 + \lambda^4 + \lambda^3 + 1) + (\lambda^8 + \lambda^7 + \lambda^2 + \lambda + 1)\alpha + \beta\right) \]
is a maximal order of $B$.  Let $\Gamma=\Gamma(1)$ denote the Fuchsian group associated to the order $\cO$ and let $X=X(1) = \Gamma \backslash \cH$ be the associated Shimura curve.  

We compute \cite{VoightShim} that the signature of the group $\Gamma$ is $(45; 2^{19}, 3^{13}, 9, 27)$, that is to say, $X$ is a curve of genus $45$ and $\Gamma$ has $19,13,1,1$ elliptic cycles of orders $2,3,9,27$, respectively.  The hyperbolic area of $X$ is equal to $5833/54$.

Next, we compute \cite{V-fd} a fundamental domain for the action of $\Gamma$ by computing a Dirichlet domain $D$ centered at $2i \in \cH$.  We exhibit the domain $D$ inside the unit disc $\cD$ in Figure 2.1, mapping $2i \mapsto 0 \in \cD$.  The domain $D$ has $630$ sides (counted according to convention), and this calculation took a CPU week.  The computation of the domain $D$ yields an explicit presentation of the group, verifies that $X$ has genus $45$, and gives representatives for the $19+13+1+1=34$ elliptic cycles.  

\begin{figure}[h]
\begin{equation} \label{funddom} \notag
\includegraphics{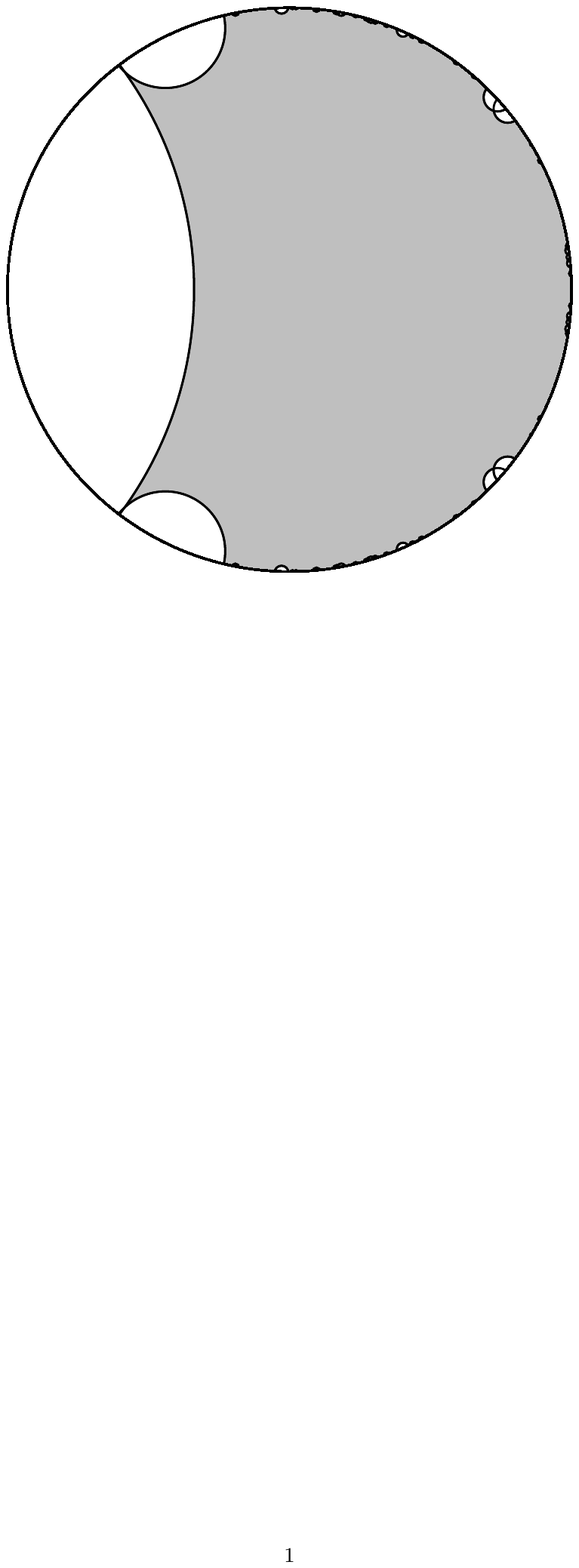}
\end{equation}
\begin{center}
\textbf{Figure \ref{funddom}}: A fundamental domain for the Shimura curve $X(1)$ over $F=\Q(\zeta_{27})^+$
\end{center}
\end{figure}
\addtocounter{equation}{1}

We then compute \cite{GreenbergVoight} the finite set of normalized Hecke eigenforms in the space $S_2(1)$, the space of Hilbert cusp forms associated to $F$ or equivalently the space of quaternionic modular forms associated to $\calO$, by computing normalized Hecke eigenforms in the cohomology group $H^1(\Gamma,\Z)^+$.  We find three Hecke-irreducible components of $S_2(1)$ with dimensions $3$, $6$, and $36$.  

Let $\T$ denote the Hecke algebra acting on $S_2(1)$, the $\Z$-subalgebra of $\End(S_2(1))$ generated by the Hecke operators $T_\fp$ with $\fp$ a prime of $\Z_F$.  Let $f$ be a newform which represents the equivalence class of the third component of $S_2(1)$, having dimension $36$.  (We will see in Remark \ref{notinteresting} below that the other two components do not give rise to interesting Galois representations modulo $3$.)  Let $H_f=\Q(a_\fp(f))$ be the field of coefficients of $f$, where where $f \slsh{} T_\fp = a_\fp(f)f$.  Then, by theory of Eichler and Shimura~\cite{shimura1} (see also Knapp~\cite[Chap. XII]{knapp1}), there is an abelian variety $A_f$ associated to $f$ of dimension $36$ defined over $F$ which is a quotient of the Jacobian $J(1)$ of $X(1)$ with the additional properties that $A_f$ has real multiplication by $H_f$ and has everywhere good reduction.  


We identify the field $H_f$ as follows.  Let $E$ be the (totally real) field generated by a root $c$ of the polynomial $x^4 + x^3 - 5x^2 - x + 3$; the field $E$ has discriminant $d_E=9301=71 \cdot 131$ and Galois group $S_4$.  Then $H_f$ is the ray class field of $E$ with conductor 
\[ \fP_3^2\fP_7\fP_{6481} = (1089c^3 + 134c^2 - 5551c + 3747), \] 
where $\fP_3$ and $\fP_{6481}$ are the unique primes in $\Z_E$ of norm $3$ and $6481$, respectively, and $\fP_7=(10c^3+c^2-51c+35)\Z_E$ is one of two primes of norm $7$.  We have $[H_f:E]=9$ and the discriminant of $H_f$ is computed to be $d_{H_f}=3^{12} 7^6 71^9 131^9 6481^8$.  By a ray class group computation, we find that $\Gal(H_f/E) \cong \Z/9\Z$.  Indeed, the homomorphism $\Gal(F/\Q) \to \Gal(H_f/E)$ defined as in (\ref{Deltatau}) is again an isomorphism, which verifies that $H_f$ is a ray class field and that $9=[F:\Q]$ divides $[H_f:\Q]=\dim A_f=36$.

Let $\T_f$ be the (restriction) of the Hecke algebra acting on the constituent corresponding to $f$; then $\T_f$ is an order of conductor $\fP_3'^6$ in the ring of integers $\Z_{H_f}$, where $3\Z_{H_f}=\fP_3\fP_3'^3$ and the primes $\fP_3,\fP_3'$ in $\T_f$ have inertial degrees $27,3$.  In particular, the order $\T_f$ is maximal at the prime $\fP_3$.

Now, by the work of Carayol \cite{carayol1}, there exists a totally odd Galois representation
\[ \rho_f:\Gal(\overline{F}/F) \to \GL_2(\T_f \otimes \Z_3) \]
such that for any prime $\fp \nmid 3$ of $\Z_F$ we have $\tr(\rho_f(\Frob_\fp))=a_\fp(f)$ and $\det(\rho_f(\Frob_\fp))=N\fp$.  In this case, where the degree of the basis field $F$ is odd, the representation $\rho_f$ can be realized explicitly in the $3$-adic Tate module of the abelian variety $A_f$.  

\begin{rmk} \label{notinteresting}
In our analysis, we consider only the prime $\fP_3$ of inertial degree $27$ in the constituent of $f$ of dimension $36$.  For the other primes above $3$ in $\T \otimes \Z_3$, in all cases the Hecke algebra is not maximal at these primes (there are unique primes above $3$ for the components of dimensions $3$ and $6$, respectively, and one other prime above $3$ for the component of dimension $36$).  In other words, for every such prime $\fP \neq \fP_3$ of $\T \otimes \Z_3$ with $\fP \mid 3$ we have $a_\frakp(f) \equiv 0 \pmod{\fP}$ for all primes $\frakp \nmid 3$, and so the reduction of $\rho_f$ modulo $\fP$ is trivial.
\end{rmk}

We are now ready to prove the following theorem.

\begin{thm} \label{peq3}
There exists a Galois extension $K$ of $\Q$ that is ramified only at $3$ with Galois group $\Gal(K/\Q) \cong 9 \cdot \PGL_2(\F_{3^{27}})$.  
\end{thm}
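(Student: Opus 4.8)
The plan is to let $K$ be the number field cut out by the mod-$\fP_3$ projective Galois representation attached to $f$, and to compute $\Gal(K/\Q)$ by exploiting the twisting action of $\Gal(F/\Q)$ on Hecke eigenvalues recorded in \eqref{Deltatau}. Reducing the representation $\rho_f$ above modulo the prime $\fP_3$ --- at which $\T_f$ is maximal, with residue field $\F_{3^{27}}$ --- yields $\overline{\rho}_f \colon \Gal(\overline{F}/F) \to \GL_2(\F_{3^{27}})$. Since $f$ has level $1$, the representation $\rho_f$, and hence $\overline{\rho}_f$, is unramified outside the prime $\fp_3$ of $F$; its determinant is the mod $3$ cyclotomic character of $F$; and since $\overline{\rho}_f$ is totally odd, for a complex conjugation $c$ the matrix $\overline{\rho}_f(c)$ has determinant $-1$.

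The central point is to show that the projective image $\overline{G}$ of $\overline{\rho}_f$ is all of $\PGL_2(\F_{3^{27}})$. For this one applies Dickson's classification of the finite subgroups of $\PGL_2(\overline{\F}_3)$ and rules out every other possibility using the Hecke eigenvalues $a_\fp(f) \bmod \fP_3$ computed \cite{GreenbergVoight} for primes $\fp$ of small norm. First, if $\overline{\rho}_f$ were reducible, then $a_\fp(f) \equiv \chi_1(\Frob_\fp) + \chi_2(\Frob_\fp) \pmod{\fP_3}$ for characters $\chi_i$ of $\Gal(\overline{F}/F)$ unramified outside $3$; since $F$ has strict class number $1$ and $\fp_3$ has residue field $\F_3$, a narrow ray class computation over $F$ produces only a short explicit list of such characters, and one checks that no resulting congruence holds for some small $\fp$. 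Second, if $\overline{G}$ were dihedral, then $\overline{\rho}_f$ would be induced from a quadratic extension of $F$ unramified outside $3$; the only such extension is $\Q(\zeta_{27})$, and then $a_\fp(f) \equiv 0 \pmod{\fP_3}$ whenever $\fp$ is inert in $\Q(\zeta_{27})/F$, which a single computed eigenvalue contradicts. Third, $A_5$ cannot embed into $\PGL_2(\F_{3^{27}})$ at all (as $27$ is odd, $5 \nmid \abs{\PGL_2(\F_{3^{27}})}$), while if $\overline{G}$ were $A_4$ or $S_4$, or became conjugate into $\PGL_2(\F_{3^d})$ for a proper divisor $d \mid 27$ after twisting $\overline{\rho}_f$ by a character, then all projective traces $a_\fp(f)^2 (N\fp)^{-1} \bmod \fP_3$ would lie in a proper subfield of $\F_{3^{27}}$; so it suffices to exhibit primes $\fp$ whose projective traces generate $\F_{3^{27}}$ over $\F_3$. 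Dickson's theorem then forces $\overline{G} \supseteq \PSL_2(\F_{3^{27}})$, and since $-1$ is a non-square in $\F_{3^{27}}^{*}$ (again because $27$ is odd) the value $\det \overline{\rho}_f(c) = -1$ shows $\overline{G} \not\subseteq \PSL_2(\F_{3^{27}})$, whence $\overline{G} = \PGL_2(\F_{3^{27}})$.

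Let $L$ be the fixed field of the kernel of the projectivization $\overline{\rho}_f^{\mathrm{proj}}$, so that $\Gal(L/F) \cong \PGL_2(\F_{3^{27}})$; as $\overline{\rho}_f$ is unramified outside $\fp_3$ and $F/\Q$ is ramified only at $3$, the field $L$ is ramified only at $3$. To identify $\Gal(L/\Q)$, I would argue as in the case $p=2$: for $\sigma \in \Gal(\overline{\Q}/\Q)$ with a lift $\tilde\sigma$, the identity $a_{\sigma(\fp)}(f) = \tau(\sigma)(a_\fp(f))$ of \eqref{Deltatau}, reduced modulo $\fP_3$, together with Brauer--Nesbitt shows that the representation $g \mapsto \overline{\rho}_f(\tilde\sigma g \tilde\sigma^{-1})$ is isomorphic to $\overline{\rho}_f$ post-composed with the automorphism of $\F_{3^{27}}$ induced by $\tau(\sigma) \in \Delta$; consequently the kernel of $\overline{\rho}_f^{\mathrm{proj}}$ is normal in $\Gal(\overline{\Q}/\Q)$, so $L/\Q$ is Galois. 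Moreover $\Delta = \Gal(H_f/E) \cong \Z/9\Z$ acts \emph{faithfully} on $\F_{3^{27}}$: since $3 \nmid d_E$ and $3\Z_{H_f} = \fP_3\fP_3'^3$, the prime $\fP_3$ is the unique, hence totally inert, prime of $H_f$ above a prime of $E$ of residue degree $3$, so its decomposition group is all of $\Delta$, yielding an injection $\Delta \hookrightarrow \Gal(\F_{3^{27}}/\F_3) = \Z/27\Z$. We therefore obtain an extension
\[ 1 \longrightarrow \PGL_2(\F_{3^{27}}) \longrightarrow \Gal(L/\Q) \longrightarrow \Z/9\Z \longrightarrow 1 \]
whose induced outer action $\Z/9\Z \to \Out(\PGL_2(\F_{3^{27}})) = \Gal(\F_{3^{27}}/\F_3)$ is this injection. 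Since $\PGL_2(\F_{3^{27}})$ has trivial centre, this is the unique extension with the given outer action and it splits, so $\Gal(L/\Q) \cong 9 \cdot \PGL_2(\F_{3^{27}})$ and we may take $K = L$.

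I expect the main obstacle to be the big-image argument, and within it the exclusion of the ``subfield after twist'' case: this is precisely the purpose of the projective-trace computation, and it rests on having computed the residual Hecke eigenvalues accurately in the large field $\F_{3^{27}}$ and on knowing that $\T_f$ is maximal at $\fP_3$. The ancillary inputs --- the narrow ray class lists used for the reducible and dihedral cases, the fact that the decomposition group of $\fP_3$ in $\Gal(H_f/E)$ is all of $\Delta$, and the formal group theory assembling $\Gal(L/\Q)$ --- are comparatively routine.
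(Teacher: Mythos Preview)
Your argument is correct and reaches the same conclusion, but the big-image step proceeds quite differently from the paper. The paper computes, for a handful of small primes $\fp$, the order $o_\fp(f)$ of $\rmP\overline{\rho}_f(\Frob_\fp)$ in $\PGL_2(\F_{3^{27}})$ directly from the characteristic polynomial $x^2-\overline{a}_\fp(f)x+N\fp$, and exhibits elements of orders $q-1$ and $(q+1)/4$; since no cyclic, dihedral, affine, or exceptional subgroup---and no $\PGL_2(\F_{3^d})$ with $d\mid 27$, $d<27$---contains elements of both orders (indeed $\lcm(q-1,(q+1)/4)$ already exceeds $\lvert\PGL_2(\F_{3^9})\rvert$), the image is forced to be all of $\PGL_2(\F_{3^{27}})$. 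No class field theory enters. Your route instead leans on the arithmetic of $F$: because $F$ has strict class number $1$ and the prime-to-$3$ part of $(\Z_F/\fp_3)^\times$ is just $\{\pm 1\}$, the only characters $\Gal(\overline F/F)\to\overline{\F}_3^\times$ unramified outside $3$ are $1$ and $\omega$, and correspondingly $\Q(\zeta_{27})$ really is the unique quadratic extension of $F$ unramified outside $\fp_3$. Your assertion here is correct but not obvious---it is exactly this ray class computation, and you should say so. That single fact dispatches the reducible and dihedral cases, and also the one case your list omits: cyclic projective image with $\overline{\rho}_f$ irreducible (non-split Cartan), since any character to $\F_{3^{54}}^\times$ of the required type already lands in $\{\pm 1\}\subset\F_{3^{27}}^\times$ and the case collapses to the reducible one. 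Your projective-trace criterion then handles $A_4$, $S_4$, and the subfield groups uniformly. The paper's method is more self-contained (one table of orders suffices); yours explains structurally why the small cases are so constrained over this particular $F$.

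Your treatment of the extension $1\to\PGL_2(\F_{3^{27}})\to\Gal(K/\Q)\to\Z/9\Z\to 1$, identifying the outer action via the inertness of $\fP_3$ in $H_f/E$ and deducing the splitting from the triviality of the centre of $\PGL_2$, is more explicit than the paper, which simply asserts the semidirect-product form.
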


\begin{proof}

We reduce the representation $\rho_f$ modulo the prime $\fP_3$ of $\T_f$ of degree $27$ to obtain a representation
\[ \overline{\rho}_f:\Gal(\overline{F}/F) \to \GL_2(k) \]
where $k=\F_{3^{27}}$.  Let $c$ satisfy $c^{27}-c^7+1=0$ so that $k=\F_3(c)$ and $k^\times$ is generated by $c$, and let $q=\#k=3^{27}$.  Note in particular that $\ker{\overline{\rho}_f}$ is Galois over $\Q$ since $\Gal(F/\Q)$ fixes the ideal $\fP_3$.

In Table \ref{tableF27apop}, we tabulate the discrete logarithm to the base $c$ of some eigenvalues $\overline{a}_\fp(f)=\tr(\overline{\rho}_f(\Frob_{\fp})) \in k$ occurring in the representation $\overline{\rho}_f$; the logarithm takes values in $\Z/(q-1)\Z$.

\begin{table}[h]
\begin{equation} \label{tableF27apop} \notag
\begin{array}{c||c|c}
N \fp & \log_c \overline{a}_\fp(f) & o_\fp(f) \\
\hline
3\rule{0pt}{2.5ex} & 5279259797298 & - \\
53 & 4309388243332 & q-1 \\
107 & 3543848555542 & q-1 \\
109 & 5965238429265 & (q-1)/2 \\
163 & 3456998555640 & (q-1)/2 \\
269 & 2951025230806 & (q-1)/109 \\
271 & 2766037528324 & (q+1)/4
\end{array}
\end{equation}
\begin{center}
\textbf{Table \ref{tableF27apop}}: Hecke data for the representation $\overline{\rho}_f$ of level $1$ for $F=\Q(\zeta_{27})^+$
\end{center}
\end{table}
\addtocounter{equation}{1}

For example, we have
\[ \overline{a}_{\fp_{53}}(f) = c^{4309388243332} = -c^{25}+c^{24}+c^{23}-c^{21}-c^{18}+c^{15}+c^{13}+c^{11}-c^{10}-c^9-c^8-c^6-c^5-c^4+c^2 \]
and $\overline{a}_{\fp_{53}}(f)$ satisfies the polynomial
\[ x^{27} -x^{24} -x^{23} + x^{22} - x^{20} + x^{19} + x^{18} - x^{17} - x^{16} - x^{15} - x^{14} - x^{13} + x^{11} - x^6 + x^5 + x^4 + x^3 + x - 1. \]

In Table \ref{tableF27apop}, we list values for one choice of prime $\fp$ above $N\fp$; the full set of Hecke eigenvalues for the primes $\fp$ with $\fp \nmid 3$ are simply $\sigma^i(\overline{a}_\fp(f))$ for $i=0,\dots,8$, where $\sigma$ is the $27$-power Frobenius $c \mapsto c^{27}$ on $k$.  

We consider the projection $\rmP\overline{\rho}_f$ of $\overline{\rho}_f$ to $\PGL_2(k)$ and prove that the image is surjective.  For a prime $\frakp \mid 3$ of $\Z_F$, the element $\overline{\rho}_f(\Frob_{\fp})$ has characteristic polynomial $x^2-\overline{a}_\fp(f) x + N\fp$ modulo $3$, and consequently we can compute the order $o_\fp(f)$ of $\overline{\rho}_f(\Frob_{\fp})$ in $\PGL_2(k)$ for the primes in Table \ref{tableF27apop}.  We note that these orders are independent of the choice of prime $\fp$ above $N\fp$.  

With these orders in hand, we refer to Dickson's classification \cite{Dickson} of subgroups of $\PGL_2(k)$.  The image of $\rmP\overline{\rho}_f$ is obviously not an exceptional group; it is not cyclic or dihedral; it is not affine since it contains elements dividing both $q-1$ and $q+1$; and it cannot be contained in $\PSL_2(k)$ since $\overline{\rho}_f$ is totally odd and $-1$ is not a square in $k$.  Therefore the image $\rmP\overline{\rho}_f$ is projective and of the form $\PGL_2(k')$ with $k' \subseteq k$ a subfield, but already the fact that $\lcm(q-1,(q+1)/4) \mid \#\PGL_2(k')$ requires that $k'=k$.  

The desired field $K$ is then simply the Galois extension of $F$ that is invariant under $\ker(\rmP \overline{\rho}_f)$, considered as an extension of $\Q$.
\end{proof}

We observed earlier that the residual representation $\overline{\rho}_f$ can be realized in the $3$-torsion of the abelian variety $A_f$, which has good reduction everywhere.  Thus we can apply a result of Fontaine~\cite[Corollaire 3.3.2]{Fontaine1} to bound the root discriminant of $K$ as 
\[ \delta_K\le\delta_F\cdot 3^{1+\frac{1}{3-1}}=3^{22/9+3/2} \leq 76.21. \]

\subsection*{Degree 9, level $\fp_3$}

We conclude this section with results of computations for the field $F=\Q(\zeta_{27})^+$ with level $\fp_3$.  We compute the space of Hilbert modular cusp newforms $S_2(\fp_3)\spnew$ of level $\fp_3$ in two ways.  As above, we can compute the Hecke module $S_2(\fp_3)\spnew$ in the cohomology of the Shimura curve $X_0(\fp_3)$.  On the other hand, by the Jacquet-Langlands correspondence, we can compute this space \cite{Dembele2} as the space of cuspidal automorphic forms associated to a maximal order of the totally definite quaternion algebra over $F$ which is ramified at $\fp_3$ and all $9$ real places of $F$---and in this case, the computations were performed by Steve Donnelly.  

Agreeably, we find in each case that the space $S_2(\fp_3)\spnew$ has dimension $117$ and decomposes into irreducible Hecke constituents of dimensions 53 and 64.  Due to high complexity we do not perform the same detailed analysis of the coefficient field for these spaces as we did before.  Nevertheless, we again obtain a Galois representation 
\[ \overline{\rho}:\Gal(\overline{F}/F)\to\GL_2(\T\otimes\F_3). \]
The $\F_3$-algebra $\T\otimes\F_3$ has 12 non-Eisenstein maximal ideals. In Table \ref{tableF27}, we group these maximal ideals according to the degree of their residue fields and note the action of $\Gal(F/\Q)$, generated by $\sigma$.

\begin{table}[h]
\begin{equation} \label{tableF27} \notag
\begin{array}{c||c|c}
\text{Inertial degree}&\text{Number of ideals}&\text{Galois action}\\ \hline
1\rule{0pt}{2.5ex}&9&\text{$\sigma$ permutes} \\
18&2&\text{$\sigma$ fixes} \\
36&1& \text{$\sigma$ fixes} \\
\end{array}
\end{equation}
\begin{center}
\textbf{Table \ref{tableF27}}: Hecke data for the representation $\overline{\rho}$ of level $\fp_3$ for $F=\Q(\zeta_{27})^+$
\end{center}
\end{table}
\addtocounter{equation}{1}

Arguing as above, we find four other Galois extensions ramified only at $3$: one with (solvable) Galois group $18 \cdot \PSL_2(\F_3)^9$, two with Galois group $9\cdot \PGL_2(\F_{3^{18}})$, and one with Galois group $9\cdot \PGL_2(\F_{3^{36}})$.
In order to estimate the root discriminants of these fields, we note that by consideration of inertial degrees, there is no congruence modulo $3$ between the forms computed here and the forms of level $1$. Therefore, the restrictions to the decomposition group at $\fp_3$ of the corresponding residual Galois representations are \emph{tr\`es ramifi\'ee} and we cannot directly apply the result of Fontaine as above (though his remark \cite[Remarque 3.3.3]{Fontaine1} implies that one should be able to adapt the argument). We use instead a result of Moon~\cite[Lemma 2.1]{Moon1} to obtain that the root discriminants are bounded by $3^{45/18+(1+10/18)\left(1-\frac{1}{3^m}\right)}$, with $m=9,18,36$, respectively; thus they are all bounded by $3^{73/18} \approx 86.098$.  We provide the details of this argument in the next section (Remark \ref{disccalc}) as we may then directly compare it with Roberts' calculation.

\section{Nonsolvable Galois number fields ramified only at $5$}\label{section-p5}

In this section, we prove that Gross' conjecture is true for $p=5$.  Our method follows the same lines of reasoning as in the previous section.

As above (see also Brueggeman \cite{brueggeman1}, Lesseni \cite{LesseniDecic}), all candidates (ramified only at $5$) for the base field $F$ up to degree $10$ are subfields of $\Q(\zeta_{25})^+$.  We find only reducible representations arising from the field $\Q(\sqrt{5})=\Q(\zeta_5)^+$; see work of the first author \cite{Dembelesqrt5} for a detailed analysis of Hilbert modular forms over this quadratic field.

\subsection*{Degree 5, level 1}

Let $F$ be the degree $5$ subfield of $\Q(\zeta_{25})^+$.  Then $F$ is a cyclic extension of $\Q$ with discriminant $d_F=5^8$, and $F=\Q(b)$ where $b^5-10b^3-5b^2+10b-1=0$.  The field $F$ has strict class number $1$.

We proceed as in Section 2.  We first find a unit $u \in \Z_F^*$
such that the algebra $B=\quat{-1,u}{F}$ is ramified at all but one real place of $F$ and no finite place.  Next, we let $\calO$ be a maximal order in $B$, and we let $\Gamma(1)$ and $X(1)$ denote the Fuchsian group and Shimura curve associated to $\calO$, respectively.  The signature of $\Gamma(1)$ is $(2;2^5,3^{11})$ and the area of $X(1)$ is $71/6$.  Here, the field $F$ and the curve $X(1)$ are much simpler than the case considered in the previous section and our computations are significantly less laborious.  We find that the space $S_2(1)$ of Hilbert cusp forms of level $1$ is irreducible (of dimension $2$), corresponding to a normalized cusp form $f$.  We compute the eigenvalues $a_\fp(f)$ of the Hecke operators $T_\fp$ in Table \ref{tableF5l1}; here we let $\omega$ satisfy $\omega^2+\omega-1=0$ so that $\T=\Z[\omega]$ is the ring of integers in $\Q(\sqrt{5})$.  

\begin{table}[h]
\begin{equation} \label{tableF5l1} \notag
\begin{array}{c||c}
N \fp & a_\fp(f) \\
\hline
5\rule{0pt}{2.5ex} & \omega-2 \\
7 & \omega \\
32 & -5\omega+2 \\
43 & -3\omega-3 \\
101 & 4\omega-1 \\
107 & -12\omega-9 \\
149 & -8\omega+1 \\
151 & 9\omega+9 \\
157 & 10\omega+7 \\
193 & 6\omega+4 \\
199 & -9\omega-12 \\
243 & 31
\end{array}
\end{equation}
\begin{center}
\textbf{Table \ref{tableF5l1}}: Hecke eigenvalues in level $1$ for $F \subset \Q(\zeta_{25})^+$ with $[F:\Q]=5$ 
\end{center}
\end{table}
\addtocounter{equation}{1}

We list only the norm of the prime $\fp$ in Table \ref{tableF5l1} since the Hecke eigenvalue $a_\fp(f)$ is independent of the choice of prime with given norm for all such primes $\fp$.  This observation suggests that the form $f$ is the base change of a form from $\Q$.  Indeed, because $F$ has strict class number $1$, the genus $2$ curve $X(1)$ has a canonical model defined over $F$, and because the data defining $B$ is $\Gal(F/\Q)$-invariant, by functoriality (Galois descent) the curve $X(1)$ in fact has field of moduli equal to $\Q$.  But then since $[F:\Q]$ is odd, and $F$ is a field of definition for $X(1)$, the Brauer obstruction to defining $X(1)$ over $\Q$ vanishes (by work of Mestre, completed by Cardona and Quer \cite{CardonaQuer}), so $X(1)$ is defined over $\Q$.  

In fact, we can identify this base change explicitly: let $g$ be the newform of level $\Gamma_1(25)$ and weight $2$ with $\Q$-expansion given by
\[ g = q + (\zeta_5^3-\zeta_5-1)q^2 + \ldots + (\zeta_5^3+\zeta_5^2)q^7 + \ldots + (-3\zeta_5^3-3\zeta_5^2-3)q^{43} + \ldots, \] and $\chi:\,(\Z/25\Z)^\times\to\C^\times$ its associated character. Then $\chi$ has conductor 25 and order 5; and the form $g$ and its conjugates generate $S_2(25,\chi)\spnew$.

\begin{lem}\label{BC} The form $f$ is the base change of $g$ to $F$.
\end{lem}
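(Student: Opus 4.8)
The plan is to realize $f$ as the cyclic base change of $g$ and then exploit the fact that $S_2(1)$ is only two-dimensional to pin down the identification. Write $\pi_g$ for the cuspidal automorphic representation of $\GL_2(\A_\Q)$ attached to $g$. Since $F/\Q$ is cyclic, Langlands' theory of cyclic base change produces a base-change lift $\Pi = \mathrm{BC}_{F/\Q}(\pi_g)$ on $\GL_2(\A_F)$. I would show in turn that $\Pi$ is cuspidal, that it has parallel weight $2$ and conductor $(1)$ — so that it corresponds to a normalized Hilbert newform $f' \in S_2(1)$ — and finally that $f' = f$.

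Cuspidality: $\Pi$ fails to be cuspidal precisely when $\pi_g \cong \pi_g \otimes \eta$ for some nontrivial character $\eta$ of $\Gal(F/\Q)$; comparing central characters this would force $\chi = \chi\eta^2$, hence $\eta^2 = 1$, and since $\eta$ has order dividing $[F:\Q] = 5$ we get $\eta = 1$, a contradiction. Base change preserves the archimedean components, so $\Pi$ has parallel weight $2$; its central character is $\chi \circ N_{F/\Q}$, which is trivial because $F$ is precisely the subfield of $\Q(\zeta_{25})$ cut out by $\chi$, so that norms from $\A_F^\times$ land in $\ker\chi$. Since $g$ has level $25 = 5^2$ it is unramified away from $5$, so $\Pi_\fp$ is unramified for every prime $\fp \nmid 5$ of $F$.

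The remaining point — that $\Pi$ is unramified at the unique prime $\fp_5$ above $5$, which is totally (and wildly) ramified of degree $5$ over $\Q_5$ — is the technical heart. I would argue that the local component $\pi_{g,5}$ is a ramified principal series $\pi(\mu,\nu)$ with $\mu$ unramified: its central character $\chi_5$ has conductor $5^2$, which excludes the special and supercuspidal cases (there the central character has conductor at most $5$) as well as a principal series with both inducing characters ramified, so one of $\mu,\nu$, say $\mu$, is unramified, and then $\nu$ has conductor $5^2$. Consequently $\Pi_{\fp_5} \cong \pi(\mu\circ N, \nu\circ N)$ with $N = N_{F_{\fp_5}/\Q_5}$, and since $\mu\nu$ agrees with $\chi_5$ up to an unramified character while $\chi_5 \circ N = 1$ by the previous paragraph, the character $\nu\circ N = \mu^{-1}\circ N$ up to an unramified twist is unramified; hence $\Pi_{\fp_5}$ is an unramified principal series and $\Pi$ has conductor $(1)$. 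Thus $f' \in S_2(1)$, and since we have computed that $S_2(1)$ is irreducible of dimension $2$ — a single Galois orbit $\{f, f^c\}$ of newforms, $f^c$ the Galois conjugate of $f$ — we have $f' \in \{f, f^c\}$. Comparing a single Hecke eigenvalue decides which: the prime $7$ has $\chi(7) = 1$ (indeed $7 \in \ker\chi$), hence splits completely in $F$, so $a_\fp(f') = a_7(g) = \zeta_5^2 + \zeta_5^3$; normalizing the root $\omega$ of $x^2+x-1$ so that $\omega = \zeta_5^2+\zeta_5^3$, this matches the value $a_\fp(f) = \omega$ recorded in Table \ref{tableF5l1}, and therefore $f' = f$.

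The main obstacle is the local analysis at $\fp_5$: one has to identify $\pi_{g,5}$ correctly (the conductor-of-central-character bookkeeping that rules out the special and supercuspidal types) and then track its ramification under base change along the wildly ramified extension $F_{\fp_5}/\Q_5$, the essential input being that $\chi$ is killed by $N_{F/\Q}$. Everything else is either formal or a finite check — for instance one can double-check the identification by noting that $43$ also splits completely in $F$, so the predicted eigenvalue $a_{43}(g) = -3\zeta_5^3 - 3\zeta_5^2 - 3 = -3\omega - 3$ agrees with the corresponding entry of Table \ref{tableF5l1}, while the prime $2$ is inert and the predicted eigenvalue $\alpha_2^5 + \beta_2^5$ (with $\alpha_2,\beta_2$ the roots of $x^2 - a_2(g)x + \chi(2)\cdot 2$) should equal $-5\omega+2$. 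Alternatively, once $\Pi$ is known to have conductor prime to $5$ one can skip the dimension count entirely and conclude $f = f'$ from strong multiplicity one for $\GL_2$ over $F$, since $\Pi$ and $f$ then agree at all but one place.
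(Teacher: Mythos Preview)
Your argument is correct and takes a genuinely different route from the paper's own proof.

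The paper does \emph{not} analyze $\pi_{g,5}$ locally. Instead it passes to the larger field $E=\Q(\zeta_{25})^+$ and invokes a geometric result of Mazur--Wiles (cf.\ also Schoof) to the effect that the modular abelian variety $A_g$ acquires everywhere good reduction over $E$; hence $BC_{E/\Q}(g)$ lands in the level-$1$ space over $E$. A separate computation of that (much larger) space over $E$ then forces $BC_{E/\Q}(g)=BC_{E/F}(f)$, and the theorem on fibres of base change (G\'erardin--Labesse) gives $BC_{F/\Q}(g)=f\otimes\eta$ for some quadratic character $\eta$ of $\Gal(E/F)$, which is killed by the same comparison at a prime over $7$ that you use.

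Your approach trades this geometric input and the auxiliary degree-$10$ computation for a purely local argument: the conductor bookkeeping pinning down $\pi_{g,5}$ as a principal series $\pi(\mu,\nu)$ with $\mu$ unramified, together with the observation that $F$ is exactly the field cut out by $\chi$ so that $\chi\circ N_{F/\Q}=1$, immediately makes $\Pi_{\fp_5}$ spherical. This is shorter and avoids both Mazur--Wiles and the computation over $E$; the paper's route, on the other hand, sidesteps the local Langlands classification entirely and would generalize more readily to situations where the local component at the bad prime is less transparent. Both proofs finish identically, by exploiting the irreducibility of $S_2(1)$ and matching the Hecke eigenvalue at $7$.
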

\begin{proof} Let $BC_{E/F}(f)$ be the base change of $f$ from $F$ to $E=\Q(\zeta_{25})^+$, and let $BC_{F/\Q}(g)$ (resp.\ $BC_{E/\Q}(g)$) the base change of $g$ from $\Q$ to $F$ (resp.\ from $\Q$ to $E$). Then, since $f$ is a newform of parallel weight 2 and level 1, so is $BC_{E/F}(f)$. 

Let $A_g$ be the modular abelian variety associated to $g$ by the Eichler-Shimura construction.
By Mazur and Wiles \cite[Chapter 3.2, Proposition 2]{MazurWiles} (see also Schoof \cite[Theorem 1.1]{Schoof} and the discussion thereafter), $A_g$ acquires everywhere good reduction over $E$. Hence, $BC_{E/\Q}(g)$ is also a newform of parallel weight 2 and level 1.  By computing the space of Hilbert modular forms over $E$ of level $1$ (see the Degree 10 case later in this section for the details of this calculation), we conclude that
$$BC_{E/F}(BC_{F/\Q}(g))=BC_{E/\Q}(g)=BC_{E/F}(f).$$ Hence, by functoriality of base change (see G\'eradin and Labesse \cite[Theorem 2]{GeradinLabesse}), we must have $BC_{F/\Q}(g)=f\otimes \eta$ for some character $\eta:\,\Gal(E/F)\to\{\pm 1\}$. But we see that $a_\fp(BC_{F/\Q}(g))=a_\fp(f)$ for any prime $\fp\mid 7$ in $\Z_F$. Thus $\eta = 1$, since all the primes above $7$ in $\Z_F$ are inert in $\Z_E$, and $BC_{F/\Q}(g)=f$ as claimed. 
\end{proof}
Let
\[ \overline{\rho}_g:\Gal(\overline{\Q}/\Q) \to \GL_2(\F_5) \]
be the mod $5$ representation associated to $g$.  By the proof of Serre's conjecture by Khare \cite{khare1}, the image of $\overline{\rho}_g$ is solvable and by Lemma~\ref{BC}, we have $\overline{\rho}_f=\overline{\rho}_g|_{\Gal(\overline{F}/F)}$.  Therefore, the image of $\overline{\rho}_f$ is solvable as well.  For completeness, we identify the field cut out by $\overline{\rho}_f$.
\begin{lem}
The fields cut out by $\overline{\rho}_g$ and $\overline{\rho}_f$ are $\Q(\zeta_5)$ and $F(\zeta_5)$, respectively.
\end{lem}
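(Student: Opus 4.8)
The plan is to identify $\overline{\rho}_g$ exactly and then descend along base change.  By Lemma~\ref{BC} we have $\overline{\rho}_f=\overline{\rho}_g|_{\Gal(\overline{F}/F)}$, so the field cut out by $\overline{\rho}_f$ is the compositum of $F$ with the field $L$ cut out by $\overline{\rho}_g$; since $F$ and $\Q(\zeta_5)$ are the degree $5$ and degree $4$ subfields of $\Q(\zeta_{25})$, they are linearly disjoint over $\Q$, so it suffices to prove $L=\Q(\zeta_5)$.  Now $\overline{\rho}_g$ is odd and unramified outside $5$, so as recalled in the introduction it is reducible; write $\overline{\rho}_g\sim\left(\begin{smallmatrix}\psi_1&*\\0&\psi_2\end{smallmatrix}\right)$.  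Each $\psi_i$ takes values in $\overline{\F}_5^\times$, hence has order prime to $5$ and is tamely ramified; being unramified outside $5$, and since $\Q$ admits no everywhere-unramified extension, $\psi_i$ factors through $\Gal(\Q(\zeta_5)/\Q)\cong\F_5^\times$, i.e.\ $\psi_i=\overline{\chi}_5^{a_i}$ for the mod $5$ cyclotomic character $\overline{\chi}_5$.  Since the nebentypus of $g$ has order $5$ it reduces to the trivial character, so $\det\overline{\rho}_g=\overline{\chi}_5$ and $a_1+a_2\equiv 1\pmod 4$; reducing the eigenvalues of Table~\ref{tableF5l1} modulo the prime of $\Z[\omega]$ above $5$ (under which $\omega\equiv 2$) and matching $a_\fp(f)\equiv\overline{\chi}_5(\Frob_\fp)^{a_1}+\overline{\chi}_5(\Frob_\fp)^{a_2}$ — for instance $a_{\fp_7}(f)=\omega\equiv 2$ forces $2^{a_1}+2^{a_2}\equiv 2$, and $a_{\fp_{43}}(f)=-3\omega-3\equiv 1$ forces $3^{a_1}+3^{a_2}\equiv 1$ — gives $\{a_1,a_2\}=\{2,3\}$.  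As $\overline{\chi}_5^3$ has exact order $4$, the semisimple representation $\overline{\chi}_5^2\oplus\overline{\chi}_5^3$ already cuts out precisely $\Q(\zeta_5)$.

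The crux is to show that $\overline{\rho}_g$ is in fact semisimple, i.e.\ that the extension class $*$ vanishes: a nonsplit such extension would be \emph{tr\`es ramifi\'ee} at $5$, with image of order $20$, and would enlarge $L$.  The class $*$ lies in $H^1(\Z[1/5],\F_5(\overline{\chi}_5^{\pm1}))$, which is one-dimensional over $\F_5$ (the sign according to which of $\overline{\chi}_5^2,\overline{\chi}_5^3$ is the subrepresentation), so it does not vanish for formal reasons and a genuine argument is required.  The direct route — and the one fitting the computational framework of the paper — is to use the representation $\overline{\rho}_f$ computed from the Shimura curve $X(1)$: by Chebotarev the image of $\overline{\rho}_f$ is generated by the matrices $\overline{\rho}_f(\Frob_\fp)$, and from the explicit eigensystem one checks — looking in particular at primes $\fp$ with $N\fp\equiv 1\pmod{25}$, where a nonsplit extension would produce an element of order $5$ — that these all lie in one cyclic subgroup of order $4$; hence the image of $\overline{\rho}_f$, and therefore of $\overline{\rho}_g$, is cyclic of order $4$.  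A more conceptual alternative is to exploit the local structure at $5$: $\rho_g$ is de Rham there with Hodge--Tate weights $\{0,1\}$, and since the nebentypus of $g$ has conductor exactly $5^2$ its local component at $5$ is a ramified principal series of minimal level, which constrains the restriction of $*$ to the decomposition group at $5$; combined with the triviality of the class group of $\Q(\zeta_5)$ (so that everywhere-unramified classes vanish and this local condition pins down $*$), this forces $*=0$.

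Granting the vanishing of $*$, we get $L=\Q(\zeta_5)$, and by the first paragraph the field cut out by $\overline{\rho}_f$ is $F\cdot\Q(\zeta_5)=F(\zeta_5)$, as claimed.  The only delicate point is the last one: in contrast with the $p=3$ situation, where the \emph{tr\`es ramifi\'ee} behavior at $\fp_3$ genuinely occurs and must be accommodated, here one must rule it out, and the safest verification is via the explicit matrices for $\overline{\rho}_f$.
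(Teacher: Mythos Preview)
Your route to the semisimplification differs from the paper's. The paper checks the $q$-expansion congruence $\theta g\equiv\theta^3 G_2\pmod{5}$ with the weight-$2$ Eisenstein series (a single finite Sturm-bound verification), from which $\overline{\rho}_g^{\mathrm{ss}}\cong\overline{\chi}_5^{2}\oplus\overline{\chi}_5^{3}$ falls out by untwisting. You instead invoke Khare--Wintenberger to get reducibility, force the diagonal characters to be powers of $\overline{\chi}_5$ via the unramified-outside-$5$ and tameness constraints, fix the determinant using the order-$5$ nebentypus, and then solve for $\{a_1,a_2\}=\{2,3\}$ by matching two Frobenius traces from Table~\ref{tableF5l1}. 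Both arguments are correct and land on the same semisimplification; the paper's is more economical (one congruence of forms), while yours makes the structural input (Serre's conjecture, tameness) explicit.

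Where your proof diverges substantively is the second paragraph. In the paper's usage---and in the standard convention, reinforced by the citation of Ellenberg---$\overline{\rho}_g$ denotes the \emph{semisimple} residual representation attached to $g$, so once the Eisenstein congruence is established there is nothing more to prove: the ``direct sum'' assertion is simply the fact that the Eisenstein representation is split. Your worry about a nontrivial extension class $*$ is therefore addressing a question the paper does not pose. If you do insist on treating an honest lattice reduction, note that your proposed resolution has a gap: observing that finitely many $\overline{\rho}_f(\Frob_\fp)$ have order dividing $4$ does not, by itself, exclude order-$5$ elements in the image; turning this into a finite check requires an effective bound (Sturm or effective Chebotarev), at which point you are essentially back to the paper's $q$-expansion method. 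The local sketch at $5$ is likewise not carried to a conclusion. So either adopt the semisimple convention and drop the second paragraph, or make the finiteness explicit.
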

\begin{proof}
Computing Hecke eigenvalues of $g$ we confirm that 
\[ pa_p(g) \equiv p^3(p+1) \pmod{5} \]
for all primes $p$, thus $\theta g \equiv \theta^3 G_2 \pmod{5}$ where $G_2$ is the weight $2$ Eisenstein series and $\theta$ is the usual theta operator on modular forms modulo $p$.  It follows from this congruence that $\overline{\rho}_g$ is the direct sum of two powers of the mod $5$ cyclotomic character and, thus,  that $\overline{\rho}_g$ cuts out $\Q(\zeta_5)$.  (Alternatively, this follows from Ellenberg~\cite[Proposition 1.2]{Ellenberg}.)  Therefore, $\overline{\rho}_f=\overline{\rho}_g|_{\Gal(\overline{F}/F)}$ cuts out $F(\zeta_5)$.
\end{proof}

\subsection*{Degree 5, level $\fp_5$}

To find a Galois representation of $F$ which has nonsolvable image, we now raise the level.  Let $\cO_0(\fp_5) \subset B$ be an Eichler order of level $\fp_5$ with $\fp_5$ the unique prime above $5$, and let $X_0(\fp_5)$ be the Shimura curve of level $\fp_5$ associated to $\cO_0(\fp_5)$.  Then $X_0(\fp_5)$ is defined over $F$ and has genus $34$.  

We compute the space of Hilbert modular cusp newforms $S_2(\fp_5)\spnew$ of level $\fp_5$ again in two ways: the Hecke module $S_2(\fp_5)\spnew$ occurs both in the cohomology of the Shimura curve $X_0(\fp_5)$ and in the space of cuspidal automorphic forms associated to the maximal order of a totally definite quaternion algebra over $F$ which is ramified at $\fp_5$ and all real places of $F$.  Agreeably, we find in each case that the space has dimension $30$.  

Let $\T\spnew$ be the Hecke algebra acting on $S_2(\fp_5)\spnew$, the $\Z$-subalgebra of $\T$ generated by all Hecke operators $T_\fp$ with $\fp \neq \fp_5$.  The space $S_2(\fp_5)\spnew$ has two irreducible components under the action of $\T\spnew$, of dimensions $10$ and $20$, corresponding to newforms $f$ and $g$.  Let $H_f$ and $H_g$ denote the field of coefficients of $f$ and $g$, respectively.  Again by the theory of Eichler and Shimura, we can decompose the new part of the Jacobian $J_0(\fp_5)\spnew$ of $X_0(\fp_5)$ up to isogeny as
\[ J_0(\fp_5)\spnew \xrightarrow{\sim} A_f \times A_g \]
where $A_f$ and $A_g$ are abelian varieties of dimension $10$ and $20$ with good reduction outside $\fp_5$ and which have real multiplication by $H_f$ and $H_g$, respectively.  

We now identify the fields $H_f$ and $H_g$ by direct computation.  The field $H_f$ is the ray class field of $E_f=\Q(\sqrt{421})$ with conductor $\fP_{11}$, where $\fP_{11}$ is a prime above $11$ in $\Z_{E_f}$.  We find that $[H_f:E_f]=5$ and $d_{H_f}=11^4 421^5$. The restriction $\T_f$ of the Hecke algebra $\T\spnew$ acting on the constituent of $f$ is then identified with the maximal order $\Z_{H_f}$ of $H_f$.  The ideal $5$ splits in $E_f$ and each of these primes remain inert in $H_f$, so that
\begin{equation} \label{5split}
5\T_f = \fP_5 \fP_5'
\end{equation}
with the inertial degree of the primes $\fP_5,\fP_5'$ being equal to $5$.  In particular, we note that $\Aut(H_f)= \Gal(H_f/E_f) \cong \Z/5\Z$ fixes each of the ideals $\fP_5$ and $\fP_5'$.  

The field $H_g$ is obtained as follows.  Let $E_g$ be the (totally real) quartic field generated by a root of the polynomial $x^4 + 2x^3 - 75x^2 - 112x + 816$.  Then the field $E_g$ has discriminant $d_{E_g}=2^2 5^1 13936963$ and Galois group $S_4$.  The prime $71$ splits completely in $E_g$ and $H_g$ is the ray class field of $E_g$ of conductor $\fP_{71}$, where $\fP_{71}$ is a prime above $71$.  We have $[E_g:H_g]=10$ and $d_{H_g}=2^{10} 5^5 71^4 13936963^5$.  The restriction $\T_g$ of $\T\spnew$ to the constituent of $g$ is a suborder of index $32$ in the maximal order of $H_f$.  Here, we have
\begin{equation} \label{5split3}
5\Z_{E_g} = \fP_5^2 \fP_5';
\end{equation}
the prime $\fP_5$ splits completely in $H_g$ and the primes above $\fP_5$ in $H_g$ are permuted by $\Aut(H_g) = \Gal(H_g/E_g) \cong \Z/5\Z$ whereas the prime $\fP_5'$ is inert.

We note again in each of these cases the map defined in (\ref{Deltatau}) is an isomorphism, accounting for the fact that the fields $H_f$ and $H_g$ are ray class fields and $5=[F:\Q]$ divides both $[H_f:\Q]=\dim A_f=10$ and $[H_g:\Q]=\dim A_g=20$.

As above, corresponding to the newforms $f$ and $g$ are representations
\begin{equation} \label{fandg}
\rho_f,\rho_g: \Gal(\overline{F}/F) \to \GL_2(\T_f \otimes \Z_5), \GL_2(\T_g \otimes \Z_5)
\end{equation}
which arise in the $5$-adic Tate modules of the abelian varieties $A_f$ and $A_g$, respectively.  The reductions of these forms indeed give rise to nonsolvable extensions.

\begin{thm} \label{peq5}
There exist Galois extensions $K,K'$ of $\Q$ that are linearly disjoint over $F$ and ramified only at $5$ with Galois group 
\[ \Gal(K/\Q), \Gal(K'/\Q) \cong 5\cdot\PGL_2(\F_{5^5}). \]
Furthermore, there exist Galois extensions $L$ and $M$ of $\Q$ that are ramified only at $5$ with Galois groups $\Gal(L/\Q) \cong 10 \cdot \PSL_2(\F_5)^5$ and $\Gal(M/\Q) \cong 5 \cdot \PGL_2(\F_{5^{10}})$.
\end{thm}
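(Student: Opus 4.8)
The plan is to follow the proof of Theorem~\ref{peq3} case by case. First I would set up the residual representations. Since $5\T_f=\fP_5\fP_5'$ with both primes unramified of inertial degree $5$ (see~(\ref{5split})), reduction of $\rho_f$ gives
\[ \overline{\rho}_f,\ \overline{\rho}_f'\colon\Gal(\overline F/F)\longrightarrow\GL_2(\F_{5^5}), \]
one for each prime; by~(\ref{5split3}) the prime $\fP_5$ of $\Z_{E_g}$ splits completely in $H_g$ into five primes of inertial degree $1$, permuted cyclically by $\Aut(H_g)\cong\Gal(F/\Q)$, whereas $\fP_5'$ remains prime of inertial degree $10$; reducing $\rho_g$ accordingly yields five representations $\overline{\rho}_g^{(1)},\dots,\overline{\rho}_g^{(5)}\colon\Gal(\overline F/F)\to\GL_2(\F_5)$ and one representation $\overline{\rho}_g'\colon\Gal(\overline F/F)\to\GL_2(\F_{5^{10}})$. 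In every case the prime, or the set of primes, of reduction is stable under $\Aut(H_\bullet)\cong\Gal(F/\Q)$, so the kernel of the associated projective representation is normal in $\Gal(\overline F/\Q)$; thus the fixed fields $K,K'$ of $\ker\rmP\overline{\rho}_f,\ker\rmP\overline{\rho}_f'$, the fixed field $M$ of $\ker\rmP\overline{\rho}_g'$, and the fixed field $L$ of $\bigcap_i\ker\rmP\overline{\rho}_g^{(i)}$ are all Galois over $\Q$, and each of $\Gal(K/\Q),\Gal(K'/\Q),\Gal(M/\Q),\Gal(L/\Q)$ is an extension of $\Gal(F/\Q)\cong\Z/5\Z$ by the image of the corresponding projective representation.

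Next I would compute, exactly as in Table~\ref{tableF27apop}, a list of Hecke eigenvalues $\overline{a}_\fp=\tr(\overline{\rho}(\Frob_\fp))$ and the orders $o_\fp$ of $\overline{\rho}(\Frob_\fp)$ in the relevant projective group (read off from $x^2-\overline{a}_\fp x+N\fp$), and invoke Dickson's classification of subgroups of $\PGL_2$~\cite{Dickson}. For $\overline{\rho}_f$ and $\overline{\rho}_f'$ this shows the image is not exceptional, not cyclic or dihedral, and not contained in a Borel (it meets elements of order dividing $q-1$ and of order dividing $q+1$). The one point of the $p=3$ argument that must change is the exclusion of $\PSL_2$: since $5^5\equiv 1\pmod 4$, the element $-1$ \emph{is} a square in $\F_{5^5}$, so ``totally odd'' no longer suffices; instead one uses that $\det\overline{\rho}(\Frob_\fp)=N\fp$ takes a nonsquare value in the residue field, for example at a degree-one prime $\fp$ above $7$ (which splits completely in $F$), where $N\fp\equiv 2\pmod 5$ and $2$, of order $4$ in $\F_5^\times$, is a nonsquare in $\F_{5^5}^\times$ because $4\nmid(5^5-1)/2$. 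Dickson then forces the image to be $\PGL_2(k')$ for a subfield $k'\subseteq\F_{5^5}$; as $\F_{5^5}$ has no proper subfield and some $o_\fp$ does not divide $\#\PGL_2(\F_5)=120$, we get $k'=\F_{5^5}$, whence $\Gal(K/\Q),\Gal(K'/\Q)\cong 5\cdot\PGL_2(\F_{5^5})$. A parallel computation for $\overline{\rho}_g'$ over $\F_{5^{10}}$, now excluding the proper subfields $\F_5,\F_{5^2},\F_{5^5}$ by requiring that the $\lcm$ of suitable observed $o_\fp$ fail to divide $\#\PGL_2$ of each, identifies the image and yields $\Gal(M/\Q)\cong 5\cdot\PGL_2(\F_{5^{10}})$. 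That $K$ and $K'$ are linearly disjoint over $F$ follows from Goursat's lemma applied to $\Gal(KK'/F)\hookrightarrow\PGL_2(\F_{5^5})\times\PGL_2(\F_{5^5})$: both projections are onto, the only normal subgroups of $\PGL_2(\F_{5^5})$ are $1,\PSL_2(\F_{5^5}),\PGL_2(\F_{5^5})$, and $\overline{\rho}_f\not\cong\overline{\rho}_f'$ because their traces are the two distinct reductions of $a_\fp(f)\in\Z_{H_f}$ modulo $\fP_5$ and $\fP_5'$.

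The main obstacle is the field $L$, where one must control the image $H$ of $\Psi=(\rmP\overline{\rho}_g^{(1)},\dots,\rmP\overline{\rho}_g^{(5)})\colon\Gal(\overline F/F)\to\PGL_2(\F_5)^5$. Running the Dickson argument on each factor (using a nonsquare value of the determinant to exclude $\PSL_2(\F_5)=A_5$, and observed $o_\fp\in\{4,5,6\}$ to exclude the smaller subgroups) shows each of the five projections of $H$ onto $\PGL_2(\F_5)=S_5$ is surjective; and since all five $\overline{\rho}_g^{(i)}$ have the same determinant character $\fp\mapsto N\fp\bmod 5$, the image $H$ lies in the index-$16$ subgroup $\Sigma=\{(h_1,\dots,h_5)\in S_5^5:\operatorname{sgn}h_1=\dots=\operatorname{sgn}h_5\}$. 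The crux is the reverse inclusion $H\supseteq\Sigma$: intersecting with $A_5^5=\PSL_2(\F_5)^5$, one has a subgroup projecting onto each $A_5$ and normalized by the cyclic group $\Gal(F/\Q)$ that permutes the five factors transitively, and a Goursat-type analysis of subgroups of a direct power of the simple group $A_5$ --- where the group-theoretic input of~\cite{GoLySol} is used --- reduces the problem to checking that no two of the $\overline{\rho}_g^{(i)}$ become isomorphic after an automorphism of $A_5$, which is verified from the Hecke data by exhibiting a prime $\fp$ whose five reductions $a_\fp(g)\bmod\fq_i$ are pairwise distinct. Granting $H=\Sigma$, and using that $\Sigma\cong A_5^5\rtimes\Z/2\Z$ (a complement generated by a diagonal transposition) while $\Gal(F/\Q)\cong\Z/5\Z$ acts by the coordinate permutation, which commutes with that diagonal $\Z/2\Z$, one obtains $\Gal(L/\Q)\cong\PSL_2(\F_5)^5\rtimes\Z/10\Z=10\cdot\PSL_2(\F_5)^5$, which together with the preceding completes the proof. (The root discriminant bounds recorded in the Table would then follow from Moon's lemma~\cite{Moon1}, as in Remark~\ref{disccalc}, the residual representations being \emph{tr\`es ramifi\'ees} at $5$ so that Fontaine's theorem does not apply.)
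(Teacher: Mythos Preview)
Your approach matches the paper's: split into the $f$- and $g$-cases, apply Dickson's classification to each residual projective representation, and for $L$ invoke the Gorenstein--Lyons--Solomon lemma on subgroups of $\PSL_2(\F_5)^5$ stable under the cyclic permutation, ruling out the diagonal by a prime with pairwise distinct reduced eigenvalues. You are in fact more explicit than the paper about why the $\PSL_2$-exclusion step from Theorem~\ref{peq3} needs modification over $\F_{5^5}$, correctly replacing ``$-1$ is a nonsquare'' by a nonsquare value of the determinant at a prime $\fp\mid 7$; the paper simply writes ``an argument as in the proof of Theorem~\ref{peq3}''.

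Two genuine gaps remain, both at points where the paper itself is silent or terse. First, your Goursat argument for the linear disjointness of $K$ and $K'$ only rules out the diagonal case $N_1=N_2=1$; it does not treat the fibre product over $\PGL_2/\PSL_2\cong\Z/2\Z$. Since $\det\overline\rho_f=\det\overline\rho_f'$ is the mod-$5$ cyclotomic character, the image of $\Gal(KK'/F)$ in $\PGL_2(\F_{5^5})^2$ lies in $\{(g,h):\det g\equiv\det h\bmod{(\F_{5^5}^\times)^2}\}$, and both $K$ and $K'$ contain the quadratic extension $F(\sqrt 5)$; thus linear disjointness over $F$ cannot be obtained this way (and the paper's proof does not address it). Second, your ``parallel computation'' for $M$ over $k=\F_{5^{10}}$ cannot reuse the nonsquare-determinant trick: since $8\mid 5^{10}-1$, every element of $\F_5^\times$ is a square in $k^\times$, so the cyclotomic determinant lands in $(k^\times)^2$ and the projective image is automatically contained in $\PSL_2(k)$. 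The orders recorded in Table~\ref{tableafo} all divide $(q\pm 1)/2$, consistent with this. Hence Dickson yields $\PSL_2(\F_{5^{10}})$ (still nonsolvable) rather than $\PGL_2(\F_{5^{10}})$; the paper's ``arguments similar to the above'' does not resolve this point either.
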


\begin{proof} 
Our proof falls into two cases, corresponding to the representations $\rho_f$ and $\rho_g$ in (\ref{fandg}).

\begin{proof}[Case $1$]
\renewcommand{\qedsymbol}{}
We first exhibit the extensions $K$ and $K'$.  Let $\fm_f,\fm_f'$ be the maximal ideals of $\T_f\otimes\Z_5$ with residue field $k=\F_{5^5}$ by (\ref{5split}), and let $\overline{\rho}_f$ and $\overline{\rho}_f'$ be the reduction of $\rho_f$ modulo $\fm_f$ and $\fm_f'$, respectively.  

We now tabulate the data for the Hecke operators for the representations $\overline{\rho}_f$ and $\overline{\rho}_f'$.  Let $c \in \overline{\F}_5$ be a root of the polynomial $x^5 - x - 2$, so that $k=\F_5(c)$.  In Table \ref{tableffp}, we list for a prime $\fp$ of $\Z_F$ the eigenvalue $\overline{a}_\fp(f)=\tr(\overline{\rho}_f(\Frob_\fp)) \in k$ occurring in the representation $\overline{\rho}_f$ and the order $o_\fp(f)$ of the image $\rmP\overline{\rho}_f(\Frob_\fp) \in \PGL_2(k)$, and we list the similar quantities for the representation $\overline{\rho}_f'$.  

\begin{table}[h]
\begin{equation} \label{tableffp} \notag
\begin{array}{c||cc|cc}
N \fp & \overline{a}_\fp(f) & \overline{a}_\fp(f') & o_\fp(f) & o_\fp(f') \\
\hline
5\rule{0pt}{2.5ex} & 4 & 4 & - & - \\
7 & 3c^4+3c^3+c^2+2c+2 & 3c^4+c^3+4c+2 & 1042 & 284 \\
32 & 0 & 0 & 2 & 2 \\
43 & 2c^4+3c^3+c^2+4c & 3c^4+c^3+c^2+c+2 & 1042 & 3124 \\
101 & 2c^4+3c^3+c^2+4 & 2c^4+4c^3+4c+1 & 781 & 1562 \\
107 & 4c^4+4c^2+c+4 & 2c^4+4c^3+2c^2 & 3126 & 3124 \\
149 & 2c^4+2c^3+4c^2+2c+2 & 4c^4+4c^3+2c+2 & 1563 & 142 \\
151 & 3c^4+4c^3+4c^2+3c+4 & 2c^4+4c^3+2c^2+3c+2 & 781 & 1563 \\
157 & 2c^4+c^2+2c+1 & 2c^4+c^3+4c^2+3c+3 & 1042 & 284 \\
193 & 3c^4+2c^3+4c^2+2c & 4c^4+c^3+4c^2+2c & 3124 & 3124 \\
199 & 4c^4+2c^3+4c^2+1 & 3c^4+4c^3+c^2+4c+4 & 1562 & 781 \\
243 & 0 & 1 & 2 & 4 \\
257 & 2c^4+4c^3+2c^2+2c & 4c^4+3c^2+c+4 & 3126 & 3126
\end{array}
\end{equation}
\begin{center}
\textbf{Table \ref{tableffp}}: Hecke data for the representations $\overline{\rho}_f$ and $\overline{\rho}_f'$ of level $\fp_5$ for $F \subset \Q(\zeta_{25})^+$ with $[F:\Q]=5$
\end{center}
\end{table}
\addtocounter{equation}{1}

We list in Table \ref{tableffp} the eigenvalues for a choice of prime $\fp$ with specified norm; the others can be recovered from the fact that 
there exists $\sigma\in\Gal(F/\Q)$ such that
\[ \overline{a}_{\sigma(\fp)}(f)=\tau(\overline{a}_\fp(f)) \] 
where $\tau: k \to k$ given by $c \mapsto c^5$ is the $5$-power Frobenius.  

An argument as in the proof of Theorem \ref{peq3} shows that the images $\rmP \overline{\rho}_f$ and $\rmP \overline{\rho}_f'$ are surjective to $\PGL_2(k)$.  Let $K$ and $K'$ be the Galois extensions of $F$ cut out by $\ker(\rmP \overline{\rho}_f)$ and 
$\ker(\rmP \overline{\rho}_f')$, respectively, each with Galois group $5 \cdot \PGL_2(\F_{5^5})$.  Since $\Aut(H_f)=\Gal(H_f/E_f)$ fixes $\fm_f$ and $\fm_f'$ individually, we find that $K$ and $K'$ are Galois over $\Q$, as claimed.
\end{proof}

\begin{proof}[Case $2$]
\renewcommand{\qedsymbol}{}
In a similar fashion, we exhibit the extensions $L$ and $M$ from the form $g$.  Now, the maximal ideals of $\T_g \otimes \Z_5$ are as follows: there are five maximal ideals $\fm_g^{(i)}$ for $i=1,\dots,5$, each with residue field $\F_5$, and one maximal ideal $\fm_g$ with residue field $\F_{5^{10}}$.  As in Case 1, let $\overline{\rho}_g^{(i)}$ and $\overline{\rho}_g$ be the reductions of $\rho_g$ modulo the ideals $\fm_g^{(i)}$ and $\fm_g$, respectively.  In Table \ref{tableggp}, we first list the relevant Hecke data for the representations $\overline{\rho}_g^{(i)}$.

\begin{table}[h] 
\begin{equation} \label{tableggp} \notag
\begin{array}{c||cc|cc|cc|cc|cc}
 & \multicolumn{2}{c|}{(1)} & \multicolumn{2}{c|}{(2)} & \multicolumn{2}{c|}{(3)} & \multicolumn{2}{c|}{(4)} &
 \multicolumn{2}{c}{(5)} \\
N\fp & \overline{a}_{\fp}(g) & o_{\fp}(g) & \overline{a}_{\fp}(g) & o_{\fp}(g) & \overline{a}_{\fp}(g) & o_{\fp}(g) &
\overline{a}_{\fp}(g) & o_{\fp}(g) & \overline{a}_{\fp}(g) & o_{\fp}(g) \\
\hline
5\rule{0pt}{2.5ex} & 1 & - & 1 & - & 1 & - & 1 & - & 1 & - \\
7 & 0 & 2 & 4 & 6 & 4 & 6 & 0 & 2 & 2 & 4\\
32 & 2 & 4 & 2 & 4 & 2 & 4 & 2 & 4 & 2 & 4\\
43 & 3 & 6 & 1 & 4 & 1 & 4 & 1 & 4 & 2 & 4\\
101 & 0 & 2 & 1 & 3 & 0 & 2 & 1 & 3 & 3 & 5\\
107 & 2 & 4 & 3 & 4 & 2 & 4 & 3 & 4 & 1 & 6\\
149 & 1 & 5 & 3 & 3 & 1 & 5 & 1 & 5 & 0 & 2\\
151 & 4 & 3 & 0 & 2 & 3 & 5 & 0 & 2 & 2 & 5\\
157 & 3 & 4 & 0 & 2 & 1 & 6 & 3 & 4 & 1 & 6\\
193 & 0 & 2 & 4 & 4 & 2 & 6 & 3 & 6 & 1 & 4 \\
199 & 1 & 5 & 4 & 5 & 4 & 5 & 1 & 5 & 0 & 2\\
243 & 0 & 2 & 0 & 2 & 0 & 2 & 0 & 2 & 0 & 2\\
\end{array}
\end{equation}
\begin{center}
\textbf{Table \ref{tableggp}}: Hecke data for the representations $\overline{\rho}_g^{(i)}$ of level $\fp_5$ for $F \subset \Q(\zeta_{25})^+$ with $[F:\Q]=5$
\end{center}
\end{table}
\addtocounter{equation}{1}

In the column labeled $(i)$ for $i=1,\dots,5$ we give the data for the prime ideal $\tau^{i-1}(\fp)$ where $\tau$ is a generator of $\Gal(H_g/E_g)$.  We tabulate the data for a choice of prime $\fp$ with specified norm, but we have the relation
\[ \overline{a}_{\fp}(g^{(i)})=\overline{a}_{\fp^{(i)}}(g), \]
i.e., the cyclic group $\Z/5\Z$ simultaneously permutes the representations and the multiset of eigenvalues (and the corresponding orders).

It is clear from the data in Table \ref{tableggp} that the representation $\rmP \overline{\rho}_g^{(i)}$ is surjective for each $i=1,\dots,5$.  Let $L_i$ be the Galois extension of $F$ cut out by $\ker(\rmP \overline{\rho}_g^{(i)})$ for each $i$ and let $L$ be the compositum of the $L_i$.  Since $\tau \in \Aut(H_g)= \Gal(H_g/E_g)$ cyclically permutes the maximal ideals $\fm_g^{(i)}$, the group $\Gal(F/\Q)$ permutes the $L_i$ and therefore $L$ is Galois over $\Q$.  

Now we show that $\Gal(L/\Q) \cong 10 \cdot \PSL_2(\F_5)^5$.  It is enough to show that $\Gal(L/F) \cong 2\cdot\PSL_2(\F_5)^5$.  We have a natural injection
\begin{align*}
j: \Gal(L/F) &\hookrightarrow \prod_{i=1}^5 \PGL_2(\F_5) \\
\sigma &\mapsto (\rmP \overline{\rho}_g^{(i)} \sigma|_{L_i})_{i=1}^5.
\end{align*}
Let $G$ be the image of $j$. Then $G$ is contained in the subgroup $2\cdot\PSL_2(\F_5)^5$, which is the image of
$$\left\{(\gamma_1,\ldots,\gamma_5)\in\GL_2(\F_5)^5: \det(\gamma_1)=\cdots=\det(\gamma_5)\right\}$$ under the projection $\GL_2(\F_5)^5\to\PGL_2(\F_5)^5$. Furthermore, the composition $\mathrm{pr}_i\circ j:\,\Gal(L/F)\to\PGL_2(\F_5)$, where $\mathrm{pr}_i$ is the projection on the $i$th factor, is surjective.
Let $H=j^{-1}(\PSL_2(\F_5)^5)$ and $H'=j|_H(H)\subset\PSL_2(\F_5)^5$ its image. Then $H'$ projects surjectively onto each factor of $\PSL_2(\F_5)^5$ and is stable under cyclic permutation of these factors. Therefore, since $\PSL_2(\F_5)$ is nonabelian and simple, a lemma of Gorenstein, Lyons, and Solomon \cite[Part I, Chapter G, Section B3, Lemma 3.25, p.\ 13]{GoLySol} implies that either $H'\subset\PSL_2(\F_5)^5$ is a  diagonal subgroup or $H'=\PSL_2(\F_5)^5$. However, the former case is impossible for the following reason: the automorphisms of $\PGL_2(\F_5)$ preserve traces, and from Table \ref{tableggp} we see that for a prime $\fp$ with $N\frakp=193$, the elements $\overline{a}_\fp(g^{(i)})$ are pairwise distinct for $i=1,\dots,5$.  It follows that $G=\Gal(L/F) \cong 2\cdot H' = 2 \cdot \PSL_2(\F_5)^5$ as claimed.

Finally, we consider the representation $\overline{\rho}_g:\Gal(\overline{F}/F) \to \GL_2(k)$ where $k=\F_{5^{10}}$.  Let $d \in \overline{\F}_5$ be a root of the polynomial $x^{10} + 3x^5 + 3x^4 + 2x^3 + 4x^2 + x + 2$.  We compile the Hecke data for this case in Table \ref{tableafo}.

\begin{table}[h]
\begin{equation} \label{tableafo} \notag
\begin{array}{c||c|c}
N \fp & \overline{a}_\fp(f) & o_\fp(f) \\
\hline
5\rule{0pt}{2.5ex} & 1 & - \\
7 & d^9+4d^7+2d^6+d^4+d & 4882813 \\
32 & 3d^9+4d^7+d^6+3d^5+3d^4+d^3+2d^2+3d+4 & 13 \\
43 & 4d^9+3d^7+d^6+4d^5+4d^4+3d^2+d+4 & 4882813 \\
101 & d^9+4d^7+3d^6+d^4+3d^3+3d^2+4d+1 & 4882813 \\
107 & 4d^9+d^7+3d^6+4d^4+4d^3+3d^2+d+2 & 1627604 \\
149 & d^6+d^3+2d^2+2d+4 & 4882813 \\
151 & 4d^9+2d^7+4d^6+4d^5+4d^4+2d^2+d & 4882813 \\
157 & d^9+d^6+3d^5+d^4+2d^3+4d^2+2d+1 & 1627604 \\
193 & 2d^9+4d^7+d^6+2d^4+3d^3+3d+1 & 4882813 \\
199 & 3d^9+3d^7+3d^5+3d^4+3d^3+3d^2+d & 4882813 \\
243 & 3 & 6\\
\end{array}
\end{equation}
\begin{center}
\textbf{Table \ref{tableafo}}: Hecke data for the representation $\overline{\rho}_g$ of level $\fp_5$ for $F \subset \Q(\zeta_{25})^+$ with $[F:\Q]=5$
\end{center}
\end{table}
\addtocounter{equation}{1}

Arguments similar to the above show that the representation $\rmP\overline{\rho}_g$ is surjective, and we obtain the field $M$ as the fixed field of $\ker(\rmP\overline{\rho}_g)$.
\end{proof}

Combining these two cases, we have proved the proposition.
\end{proof}

By Shepherd-Barron and Taylor~\cite[Theorem 1.2]{ShepherdTaylor}, there exists an elliptic curve $E/F$ such that the extensions $L_i$ are realized in the 5-torsion of the $\Gal(F/\Q)$-conjugates of $E$. In fact, let 
\begin{multline*}
j=\frac{1}{5^1 7^{11}}(-16863524372777476b^4 - 21086272446873684b^3 + \\ 175687878237839123b^2 + 243736518871536282b    - 27968733551340565).
\end{multline*}
Let $E$ be the elliptic curve over $F$ with $j$-invariant $j$ and minimal conductor (recall that $F$ has class number $1$).  (The curve $E$ was found by Roberts \cite{Roberts5} by a search based on Theorem~\ref{peq5} and the data we gathered in Table~\ref{tableggp}.) Then the conductor of $E$ factors as a product $\mathfrak{p}_5\mathfrak{p}_7$ with $N\mathfrak{p}_5=5$ and $N\mathfrak{p}_7=7$, and the ideal $\mathfrak{p}_7$ is generated by the element $(-4b^4 - 2b^3 + 39b^2 + 50b - 57)/7$.  
By combining work of Skinner and Wiles~\cite[Theorem 5.1]{SkinnerWiles} and a variant of the argument of the first author~\cite[Section 4]{Dembelesqrt5}, it follows that the $3$-adic representation $\rho_{E,3}$, and hence $E$ itself, is modular. Let $X_0(\fp_5\fp_7)$ be the Shimura curve associated to an Eichler order of level $\fp_5\fp_7$ contained in the maximal order $\cO$, and let $J_0(\fp_5\fp_7)$ be the Jacobian of $X_0(\fp_5\fp_7)$.  We compute that $J_0(\fp_5\fp_7)\spnew$ has dimension 203.  Since $E$ is modular and has multiplicative reduction at $\fp_5$, it is a quotient of $J_0(\fp_5\fp_7)\spnew$.  In fact, we find a unique Hilbert newform $f_E$ of parallel weight $2$ and level $\fp_5\fp_7$ with integer Fourier coefficients which therefore must correspond to $E$.

It follows from Roberts' construction~\cite{Roberts5} that the representation $\bar{\rho}_{E,5}$ of $\Gal(\overline{F}/F)$ on the 5-torsion of $E$ is surjective and unramified at $\mathfrak{p}_7$.  Therefore, the level $\mathfrak{p}_5\mathfrak{p}_7$ is a nonoptimal level for $\bar{\rho}_{E,5}$.  Thus, by Jarvis~\cite[Mazur's Principle]{jarvis}, we have $\bar{\rho}_{E,5}\cong\bar{\rho}_g^{(i)}$ for some $1\le i\le 5$.  In other words, the extension constructed by Roberts~\cite{Roberts5} is isomorphic to our field $L$.

\begin{rmk} \label{disccalc}
We estimate the root discriminants of the fields in Theorem~\ref{peq5} as follows. Let $\mathfrak{P}$ be a prime of $\Z_L$ above $\mathfrak{p}_5$. Since $L$ is Galois over $F$, the group $\Gal(L/F)$ acts transitively on the set of those primes and we only need to estimate the discriminant $d_{L_\mathfrak{P}}$, where $L_{\mathfrak{P}}$ is the completion of $L$ at $\mathfrak{P}$ (see~\cite{Dembele2}). By construction, $L_{\mathfrak{P}}$ is the compositum of the completions of $L_i$ at $\fp_i=\Z_{L_i}\cap\mathfrak{P}$, $i=1,\ldots, 5$. The form $g$ is ordinary at $\fp_5$ since $a_{\fp_5}(g)\equiv 1\pmod{5}$.  Therefore, by Wiles~\cite[Theorem 2]{Wiles}, we have
\[ \overline{\rho}_g^{(i)}|_{D_{\fp_5}}\sim\begin{pmatrix}\chi&*\\ 0& 1\end{pmatrix} \] 
for $i=1,\ldots,5$, where $D_{\fp_5}$ is the decomposition group at $\fp_5$ and $\chi$ is the mod $5$ cyclotomic character.  From this and the fact that $\Gal(L/F)=2\cdot\PSL_2(\F_5)^5$, it follows that $L_{\mathfrak{P}}=L_0(x_1^{1/5},\ldots, x_{5}^{1/5})$, where $L_0=F_{\fp_5}(\zeta_5)=\Q_5(\zeta_{25})$ and $x_i\in L_0^\times/\left(L_0^\times\right)^5$.  Note that $L_0$ is a tamely ramified extension of $F_{\fp_5}$ of degree $4$ and $d_{L_0}=5^{35}$.  Since $L_{\mathfrak{P}}/L_0$ is a Galois $5$-elementary abelian extension, the result of Moon~\cite[Lemma 2.1]{Moon1} gives that the different $\mathcal{D}_{L_\mathfrak{P}/L_0}$ divides $5^c$, where 
\[ c=(1+6/20)(1-1/5^5)=20306/15625. \] 
Therefore, we obtain that
\[ \delta_L\le 5^{35/20}\cdot 5^c \approx 135.384. \]
In fact, the exact root discriminant of $L$ is computed in~\cite{Roberts5} to be $\delta_L=125\cdot 5^{-1/12500}\approx 124.984$.  By a similar computation, we find that $\delta_K, \delta_{K'} \leq 135.39$ and $\delta_M \leq 5^{35/20+(1+6/20)(1-1/5^{10})} \leq 135.48$.
\end{rmk}


\subsection*{Degree 10}

We conclude this section with the results of computations for the field $F=\Q(\zeta_{25})^+$, perfomed by Steve Donnelly.  The space of Hilbert modular cusp forms $S_2(1)$ of level $1$ for $F=\Q(\zeta_{25})^+$ has dimension $171$; we compute with $S_2(1)$ using the totally definite quaternion algebra $B$ over $F$ ramified at all $10$ real places of $F$ and no finite place.  The space $S_2(1)$ decomposes into irreducible Hecke constituents of dimensions 2, 4, 15 and 150 respectively.  We consider the Galois representation (constructed by Taylor \cite{taylor1}) 
$$\overline{\rho}_{\T\otimes\F_5}:\,\mathrm{Gal}(\overline{F}/F)\to\mathrm{GL}_2(\T\otimes\F_5).$$ 
The $\F_5$-algebra $\T\otimes\F_5$ has 17 non-Eisenstein maximal ideals. In Table \ref{tabled10}, we group these maximal ideals according to the degree of their residue fields and note the action of $\Gal(F/\Q)$, generated by $\sigma$.

\begin{table}[h]
\begin{equation} \label{tabled10} \notag 
\begin{array}{c||c|c}
\text{Inertial degree}&\text{Number of ideals}&\text{Galois action}\\ \hline
2\rule{0pt}{2.5ex} &10&\text{$\sigma$ permutes} \\
5&2&\text{$\sigma$ permutes, $\sigma^2$ fixes} \\
10&1& \text{$\sigma$ fixes} \\
15&1& \text{$\sigma^2$ fixes, but not $\sigma$} \\
25&2& \text{$\sigma$ permutes, $\sigma^2$ fixes} \\
40&1& \text{$\sigma$ fixes}
\end{array}
\end{equation}
\begin{center}
\textbf{Table \ref{tabled10}}: Hecke data for the representation $\overline{\rho}$ of level $1$ for $F=\Q(\zeta_{25})^+$
\end{center}
\end{table}
\addtocounter{equation}{1}

Arguing as above, we find Galois extensions ramified only at $5$ with Galois groups
\begin{center} 
$10 \cdot 2 \cdot \PSL_2(\F_{5^2})^{10},\ \ 10\cdot 2\cdot \PSL_2(\F_{5^5})^2,\ \  10\cdot \PGL_2(\F_{5^{10}})$, \\
$10\cdot \PGL_2(\F_{5^{15}}),\ \  10\cdot 2 \cdot \PSL_2(\F_{5^{25}})^2,\ \ 10\cdot \PGL_2(\F_{5^{40}})$,
\end{center}
respectively. By the result of Fontaine~\cite[Corollaire 3.3.2]{Fontaine1}, we find that the root discriminants of these fields are bounded by $5^{17/10+(1+1/4)} \leq 115.34$. 

\section{A nonsolvable number field ramified at $7$?}

To conclude our paper, we discuss the computational obstacles that we confront in applying the above techniques when $p=7$.

In this case, the possible base fields are the subfields of $\Q(\zeta_{49})$ with degrees $3$ and $7$.  First, we consider the field $F=\Q(\zeta_7)^+$.  At level $1$, the group $\Gamma(1)$ we encounter is the celebrated $(2,3,7)$-triangle group \cite{ElkiesShim}; the curve $X(1)$ is the Fuchsian group with the smallest possible area, and torsion-free (normal) subgroups of $\Gamma(1)$ are associated to Hurwitz curves, i.e., curves of genus $g$ with the largest possible order $84(g-1)$ of their automorphism group.  In particular, $X(1)$ has genus $0$, as does the curve $X_0(\fp_7)$ for $\fp_7 \mid 7$.  At level $\fp_7^2$, we find a genus $1$ curve $X_0(\fp_7^2)$ (in fact, an elliptic curve since $\Q(\zeta_7)$ has class number $1$ so there is an $F$-rational CM point on $X$) which corresponds to the base change to $F$ of the classical modular curve $X_0(49)$, or equivalently (up to isogeny) the elliptic curve over $\Q$ with complex multiplication by $\Z[(1+\sqrt{-7})/2]$ and $j$-invariant $-3375$.  As for $p=3$, it then follows from Buzzard, Diamond and Jarvis~\cite[Proposition 4.13(a)]{BDJ} that there are only reducible forms in levels which are higher powers of $\fp_7$.

Next, let $F$ be the subfield of degree $7$ in $\Q(\zeta_{49})$ with discriminant $d_F=13841287201=7^{12}$.  The corresponding Shimura curve has area $275381/6$ and signature $(22864;2^{71}, 3^{203})$, and this places it well outside the realm of practical computations using the above techniques.  We must therefore leave the problem of finding a nonsolvable Galois number field ramified only at $p=7$ for future investigation---it is conceivable that a good choice of group and base field, as explained in the introduction, will exhibit such a field.

One possible alternative track, mimicking the fortunate accident for $p=5$ exploited by Roberts, would be to search for an elliptic curve with good reduction away from $7$ defined over a number field (of any signature) which is ramified only at $7$.  We searched for curves over $\Q(\zeta_7)$ using the method of Cremona-Lingham \cite{CremonaLingham}, but our search was not exhaustive (due to the complexity of finding $S$-integral points on the relevant discriminant elliptic curves over number fields), and consequently we only managed to recover the base changes of curves defined over $\Q$ (yielding solvable extensions).  

We note finally that if one considers the weaker question of exhibiting a nonsolvable Galois number field which is unramified outside a finite set $S$ of primes with $7 \in S$, such as $S=\{2,7\}$ or $S=\{3,7\}$, then already one can find such extensions by classical forms of weight $2$ over $\Q$: see work of Wiese~\cite[Theorem 1.1]{Wiese} for a much more general result.

\end{document}